\newcommand{\Z}{{\ensuremath{\mathbb{Z}}}}
\newcommand{\spa}{\mbox{span}}
\newcommand{\supp}{\mbox{supp}}
\newtheorem{theorem}{Theorem}[section]
\newtheorem{proposition}[theorem]{Proposition}
\newtheorem{lemma}[theorem]{Lemma}
\newtheorem{corollary}[theorem]{Corollary}
\theoremstyle{definition}
\newtheorem{remark}[theorem]{Remark}
\newtheorem{definition}[theorem]{Definition}
\newtheorem{question}[theorem]{Question}
\numberwithin{equation}{section}
\begin{document}

\title[Values of graded $\ast-$polynomials]{Values of  multilinear graded $\ast$-polynomials on upper triangular matrices of small dimension}

\author{Pedro Fagundes}
\address{Department of Mathematics, State University of Campinas, 651 S\'ergio Buarque de Holanda, 13083-859 Campinas, SP, Brazil}
\email{pedro.fagundes@ime.unicamp.br}

\thanks{P.\ Fagundes was supported by Grant 2019/16994-1 and Grant 2022/05256-2, São Paulo Research Foundation (FAPESP)}

\subjclass[2020]{15A06, 16R10, 16R50}

\keywords{L'vov-Kaplansky conjecture, upper triangular matrices, graded involutions}

\begin{abstract}
Let $F$ be an algebraically closed field of characteristic different from $2$. We show that the images of multilinear $*$-graded polynomials on $UT_2$ are homogeneous vector spaces. An analogous result holds for $UT_3$ endowed with non-trivial grading. We further show that these results are optimal, in the following sense: there exist multilinear $*$-graded polynomials whose image on $UT_n$ ($n\geq 3$) with the trivial grading is not a vector space, and whose image on $UT_n$ ($n\geq4$) with the natural $\mathbb{Z}_{n}$-grading is also not a vector space. In particular, an analog of the L'vov-Kaplansky conjecture can not be expected in the setting of algebras with (graded) involutions.
\end{abstract}

\maketitle

\section{Introduction}

Let $F$ be a field and $\mathcal{A}$ be an associative algebra over $F$. Denoting by $F\langle X \rangle$ the free associative algebra, we define the image of a polynomial $f(x_1,\dots,x_m)\in F\langle X \rangle$ on $\mathcal{A}$ as the image of the function induced by $f$ on $\mathcal{A}$, that is
\[
\begin{array}{cccc}
\tilde{f}\colon & \mathcal{A}^m & \rightarrow & \mathcal{A} \\
 & (a_1,\dots,a_m) & \mapsto & f(a_1,\dots,a_m).
\end{array}
\]
By a multilinear polynomial $f(x_1,\dots,x_m)\in F\langle X \rangle$ we mean a polynomial such that each variable $x_i$ appears in every monomial of $f$ exactly once. Hence  a multilinear polynomial has the following form
\[
f(x_1,\dots,x_m)=\sum_{\alpha\in S_m}\alpha_{\sigma}x_{\sigma(1)}\cdots x_{\sigma(m)},
\]  
where $\alpha_{\sigma}\in F$ and $S_m$ stands for the symmetric group of permutations of $\{1,\dots,m\}$.

A challenging problem concerning images of polynomials on algebras is the L'vov-Kaplansky conjecture, which states that the image of a multilinear polynomial on the full matrix algebra $M_n(F)$ is always a vector space. An equivalent form of stating this conjecture is saying that such image is one of the following four subspaces: $\{0\}, F$ (the space of scalar matrices), $sl_n(F)$ (the space of traceless matrices) or $M_n(F)$. Of course each one of these four subspaces can be realized as images of multilinear polynomials on the full matrix algebra. For instance, $\{0\}$ and $F$ is always the image of multilinear polynomial identities and central polynomials, respectively. The space $sl_n(F)$ is the image of the commutator $[x_1,x_2]=x_1x_2-x_2x_1$ (see \cite{Albert,Shoda}), and $M_n(F)$ is always the image of multilinear polynomials $f$ such that $f(1,\dots,1)\neq0$. Positive results on the L'vov-Kaplansky conjecture are only known for polynomials of degree $2$ (which are an easy consequence from \cite{Albert,Shoda}) and for $2\times 2$ matrices over quadratically closed fields (see \cite{Belov1}). Partial results are also know for polynomials of degree $3$ and $3\times 3$ matrices, however the conjecture remains open on these cases as well. The interested reader might consider the survey paper \cite{Belov2} for a compilation of results concerning the L'vov-Kaplansky conjecture.

In recent years, variations of the aforementioned conjecture have been extensively studied. Let us denote by $UT_n(F)$ (or just $UT_n$ if the field is clear from the context) the algebra of $n\times n$ upper triangular matrices and by $J$ its Jacobson radical. In 2022, Gargate and de Mello \cite{Gargate} proved that the image of a multilinear polynomial on $UT_n$ is always $UT_n$ or some power of $J$, provided the ground field is infinite. Some additional structure can be also considered in the algebra and the question whether the image of a multilinear polynomial (with respect to the additional structure) on the algebra is a vector space can also be posed. For instance, images of multilinear polynomials on $UT_n$ endowed with involutions and gradings were recently explored. More precisely, in 2022 Franca and Urure \cite{Franca1} considered $UT_n$ ($n\leq 4$) endowed with the reflexive involution and proved that the image of a multilinear Lie polynomial on the skew-symmetric part of $UT_n$ is always a vector space. The same authors also studied the Jordan case for polynomials of degree $\leq3$ on the symmetric part of $UT_n$ endowed with both reflexive and sympletic involutions (see \cite{Franca2}). Concerning the graded setting, the author and Koshlukov (see \cite{FagundesKoshlukov}) classified the images of multilinear polynomials on some $\mathbb{Z}_q$-gradings on $UT_n$ (including the $\mathbb{Z}_n$ natural one). 

Besides the fact of these variations been interesting problems by themselves, connections between them and the L'vov-Kaplansky conjecture also appear. For instance we mention Theorem 5.2 from \cite{FagundesKoshlukov} and the paper \cite{Gargate2} . 

In this paper we are interested in studying the images of (associative) multilinear polynomials on the algebra $UT_n$ endowed with some graded involution. In light of the L'vov-Kaplansky conjecture and its variants, the following question is therefore naturally posed.

\begin{question}\label{question}
Let $UT_n$ be endowed with some (graded) involution and let $f$ be a multilinear (graded) $*$-polynomial. Is the image of $f$ on $UT_n$ a vector space?
\end{question}

As  we shall see, the answer to Question \ref{question} is negative in general. The affirmative answers take place only in small cases, that is, $2\times 2$ matrices, and $3\times 3$ matrices with a nontrivial grading. Beyond the cases mentioned above, we are able to find multilinear polynomials such that their image on $UT_n$ is not a vector space.

The paper is organized as follows: in Section 2 we recall the main definitions, results and notations that will be used throughout the text. In Section 3 we prove that the image of multilinear polynomials on $UT_2$ with some graded involution is always a homogeneous vector space. In section 4 an analogous result is proved for $UT_3$ provided that the grading on this algebra is not trivial. Still in this section we present examples of multilinear polynomials of degree $2$ such that their images on $UT_n$ are not vector spaces if $n\geq3$ and $UT_n$ is endowed with the trivial grading, and $n\geq4$ and $UT_n$ has the natural $\mathbb{Z}_n$-grading.

\section{Preliminaries}

Let $\mathcal{A}$ be an associative algebra over a field $F$ of characteristic different from $2$. An involution on $\mathcal{A}$ is defined as an additive map $*:\mathcal{A}\rightarrow\mathcal{A}$ such that $(ab)^{*}=b^{*}a^{*}$ and $(a^{*})^{*}=a$, for all $a,b\in\mathcal{A}$, where $a^{*}:=*(a)$. If $\lambda^{*}=\lambda$ for all $\lambda\in F$, we say that $*$ is an involution of the first kind, and of the second kind otherwise. Note that involutions of the first kind are actually linear maps. We will only be interest in involutions of the first kind. Given two algebras $\mathcal{A}$ and $\mathcal{B}$ with involutions $*_1$ and $*_2$, respectively, we say that $\varphi:\mathcal{A}\rightarrow\mathcal{B}$ is a homomorphism of algebras with involution if $\varphi$ is an algebra homomorphism satisfying $\varphi(a^{*_1})=\varphi(a)^{*_2}$, for all $a\in \mathcal{A}$. In case $\varphi$ is bijective we say that $\varphi$ is an isomorphism of algebras with involution. Two involutions $*_1$ and $*_2$ on an algebra $\mathcal{A}$ are said to be equivalents if $(\mathcal{A},*_1)$ and $(\mathcal{A},*_2)$ are isomorphic as algebras with involution. An element $a\in \mathcal{A}$ is called symmetric if $a^{*}=a$, and skew-symmetric if $a^{*}=-a$. We denote by $\mathcal{S}$ and $\mathcal{K}$ the subspaces of the symmetric and skew-symmetric elements from $\mathcal{A}$. Note that since the characteristic of $F$ is different from $2$, then $\mathcal{A}=\mathcal{S}\oplus \mathcal{K}$. 

Involutions on the upper triangular matrix algebra $UT_{n}$ are well understood. We recall two particular types of involutions first.
\begin{itemize}
\item[1.] (Reflexive) $r:UT_{n}\rightarrow UT_{n}$ given by $A^{r}=JA^{t}J$, $A\in UT_{n}$, where $A^{t}$ stands for the usual transposition of matrices and \linebreak $J=e_{1n}+e_{2,n-1}+\cdots+e_{n1}$.
\item[2.] (Symplectic) $s:UT_{2m}\rightarrow UT_{2m}$ given by $A^{s}=DA^{r}D^{-1}$, $A\in UT_{n}$, where $D=e_{11}+\cdots+e_{mm}-e_{m+1,m+1}-\cdots -e_{2m,2m}$.
\end{itemize}

\begin{theorem}[\cite{VincenzoKoshlukovScala}]\label{classification1}
    Every involution on $UT_n$ is equivalent to either the reflexive or the symplectic one.
\end{theorem}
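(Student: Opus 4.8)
The theorem is classical; the plan is to reduce the classification of involutions on $UT_n$ to a congruence problem for bilinear forms, using the structure of $\mathrm{Aut}(UT_n)$. The first ingredient I would use is that \emph{every algebra automorphism of $UT_n$ is inner}, i.e.\ of the form $A\mapsto pAp^{-1}$ for some invertible $p\in UT_n$. To see this one notes that an automorphism preserves the Jacobson radical $J$ (the strictly upper triangular matrices) and its powers, hence induces an automorphism of the $UT_n/J$-bimodule $J/J^{2}\cong\bigoplus_{i=1}^{n-1}Fe_{i,i+1}$; since $Fe_{i,i+1}$ sits between the primitive idempotents labelled $i$ and $i+1$, this bimodule "is" the directed path $1\to 2\to\cdots\to n$, whose only automorphism is the identity, so the induced permutation of the primitive idempotents of $UT_n/J$ is trivial. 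Correcting by a suitable inner automorphism one may then assume the $e_{ii}$ are fixed; the remaining automorphism scales each $e_{ij}$ ($i<j$) by a scalar $\beta_{ij}$ with $\beta_{ij}\beta_{jk}=\beta_{ik}$, so $\beta_{ij}=\gamma_{j}\gamma_{i}^{-1}$ for suitable $\gamma_{i}\in F^{\times}$, i.e.\ it is conjugation by $\mathrm{diag}(\gamma_{1}^{-1},\dots,\gamma_{n}^{-1})$.

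Given an involution $*$ on $UT_n$, both $*$ and the reflexive involution $r$ are anti-automorphisms, so $\phi:=*\circ r$ is an automorphism; writing $\phi(A)=pAp^{-1}$ with $p\in UT_n$ invertible and using that $r$ is an involution, one gets $A^{*}=\phi(A^{r})=(pJ)A^{t}(pJ)^{-1}$. Set $c:=pJ$, so that $A^{*}=cA^{t}c^{-1}$ for all $A\in UT_n$. Imposing $(A^{*})^{*}=A$ forces $c(c^{t})^{-1}$ to centralize $UT_n$, hence to be a scalar, so $c^{t}=\pm c$; and since $p$ is upper triangular and invertible, $c=pJ$ is supported on the entries $(i,j)$ with $i+j\le n+1$ and has nonzero anti-diagonal (conversely, any symmetric or skew-symmetric invertible matrix with these properties gives an involution on $UT_n$). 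A parallel computation shows that $A\mapsto c_{1}A^{t}c_{1}^{-1}$ and $A\mapsto c_{2}A^{t}c_{2}^{-1}$ are equivalent as algebras with involution if and only if $c_{2}=\nu\,qc_{1}q^{t}$ for some $\nu\in F^{\times}$ and some invertible upper triangular $q$. So the problem becomes: classify, up to congruence by invertible upper triangular matrices and scalars, the invertible symmetric or skew-symmetric matrices supported on $\{(i,j):i+j\le n+1\}$ with nonzero anti-diagonal.

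For this final step I would run a Gram--Schmidt-type normalization. An upper triangular congruence $c\mapsto qcq^{t}$ rescales the anti-diagonal entry in position $(i,n+1-i)$ by $q_{ii}q_{n+1-i,n+1-i}$, while the strictly-upper part of $q$ can be used (in Gram--Schmidt fashion) to clear every entry of $c$ lying strictly above the anti-diagonal. In the symmetric case this reduces $c$ to $J$, so $*$ is equivalent to the reflexive involution $r$. In the skew-symmetric case, if $n$ were odd the central anti-diagonal entry $c_{(n+1)/2,(n+1)/2}$ would have to vanish (as $c^{t}=-c$ and $\mathrm{char}\,F\neq2$), contradicting invertibility of an anti-upper-triangular matrix; hence $n$ is even and the same procedure reduces $c$ to $DJ$ with $D$ as in the definition of the symplectic involution $s$, so $*$ is equivalent to $s$. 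The technical heart, and the step I expect to be the main obstacle, is exactly this normalization: performing congruence reductions inside the (solvable, non-reductive) group of invertible upper triangular matrices while preserving the anti-upper-triangular shape, and checking that no further invariant survives; this is where $\mathrm{char}\,F\neq2$ is genuinely used, the scalar factor $\nu$ (or algebraic closedness of $F$) disposing of any residual discriminant-type obstruction.
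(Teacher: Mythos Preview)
The paper does not prove this statement: Theorem~\ref{classification1} is quoted without proof from \cite{VincenzoKoshlukovScala} and used only as a black box in the subsequent sections. There is therefore nothing in the present paper to compare your argument against.

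That said, your outline is correct and is essentially the argument given in the cited reference: one uses that every automorphism of $UT_n$ is inner to write an arbitrary involution as $A\mapsto cA^{t}c^{-1}$ with $c=pJ$ for some invertible $p\in UT_n$, observes that the involutivity condition forces $c^{t}=\pm c$, translates equivalence of involutions into upper-triangular congruence of the matrices $c$, and then normalizes $c$ to $J$ (symmetric case) or $DJ$ (skew case, $n$ even) by a Gram--Schmidt-type reduction. Two small remarks. First, the parenthetical about algebraic closedness is unnecessary: the free scalar $\nu$ already absorbs the only possible square-class obstruction, so the classification holds over any field of characteristic different from $2$, as stated. Second, in this paper the symbol $J$ is used both for the Jacobson radical of $UT_n$ and for the anti-diagonal matrix $e_{1n}+\cdots+e_{n1}$; if you rewrite your argument for inclusion here, you should disambiguate.
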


An involution $*$ on $\mathcal{A}$ is called a $G$-graded involution for some group $G$ (or just graded involution), if $*$ leaves invariant the homogeneous components of some $G$-grading $\Gamma$ on $\mathcal{A}$. This means that writing 
\[
\Gamma: \mathcal{A}=\bigoplus_{g\in G}\mathcal{A}_{g}
\]
as a direct sum of vector subspaces such that $\mathcal{A}_{g}\mathcal{A}_{h}\subset \mathcal{A}_{gh}$, for all $g$, $h\in G$, then 
\[
\mathcal{A}_{g}^{*}\subset \mathcal{A}_{g},
\]
for all $g\in G$. 

The subspaces $\mathcal{A}_g$ from the grading $\Gamma$ are called homogeneous components. We also define the support of the grading $\Gamma$ as the subset $\supp(\Gamma)=\{g\in G\mid \mathcal{A}_g \neq 0\}$.

A subspace $\mathcal{V}\subset \mathcal{A}$ is said to be homogeneous (with respect to the $G$-grading on $\mathcal{A}$) if $\mathcal{V}$ inherits the grading from $\mathcal{A}$, that is,
\[
\mathcal{V}=\bigoplus_{g\in G}(\mathcal{V}\cap \mathcal{A}_g).
\]

In particular, since the homogeneous components of $\mathcal{A}$ are invariant under the involution $*$, one can easily check that the symmetric and skew-symmetric parts of $\mathcal{A}$ are homogeneous subspaces. For $g\in G$, denoting $\mathcal{K}_g=\mathcal{K}\cap \mathcal{A}_g$ and $\mathcal{S}_g=\mathcal{S}\cap \mathcal{A}_g$ we therefore have
\[
\mathcal{A}=\bigoplus_{g\in G}(\mathcal{K}_g\oplus \mathcal{S}_g).
\]

Given two algebras $\mathcal{A}$ and $\mathcal{B}$ with $G$-graded involutions $*_1$ and $*_2$, respectively, we say that $\varphi:\mathcal{A}\rightarrow\mathcal{B}$ is a homomorphism of algebras with graded involution if $\varphi$ is an algebra homomorphism such that $\varphi(\mathcal{A}_{g})\subset \mathcal{B}_{g}$, for all $g\in G$, and $\varphi(a^{*_1})=\varphi(a)^{*_2}$, for all $a\in \mathcal{A}$. In case $\varphi$ is bijective we say that $\mathcal{A}$ and $\mathcal{B}$ are isomorphic as algebras with graded involution.

Graded involutions on $UT_{n}$ are also well known, but before introducing them let us recall an important example of gradings on $UT_n$. A $G$-grading on $UT_{n}$ is called elementary if the matrix units $e_{ij}$ ($1$ in the entry $(i,j)$ and zero elsewhere) are homogeneous elements in the grading, or equivalently, it is that given by a sequence $(g_{1},\dots,g_{n})\in G^{n}$ such that $\deg(e_{ij})=g_{i}^{-1}g_{j}$.  The following classification was given in \cite{FidelisGoncalvesDinizYasumura}.

\begin{theorem}\label{classificationgradedinvolution}
Let $F$ be an algebraically closed field of characteristic different from $2$, and let $G$ be a group. Let $\Gamma_{1}$ be a $G$-grading on $UT_{n}(F)$ such that $supp(\Gamma_{1})$ generates $G$. Let $\varphi_{1}$ be a graded involution on $\Gamma_{1}$. Then $(\Gamma_{1},\varphi_{1})$ is isomorphic to $(\Gamma_{2},\varphi_{2})$ where $\Gamma_{2}$ is the elementary grading on $UT_{n}$ induced by the sequence $(g_{1},\dots,g_{n})\in G^{n}$ such that $g_{1}g_{n}=g_{2}g_{n-1}=\cdots=g_{n}g_{1}$ and $\varphi_{2}$ is either $r$ or $s$.
\end{theorem}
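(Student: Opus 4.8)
The plan is to combine the classification of $G$-gradings on $UT_n$ (every grading over a field is isomorphic to an elementary one), the classification of involutions of the first kind on $UT_n$ (Theorem~\ref{classification1}: equivalence to $r$ or $s$), and the fact that every automorphism of $UT_n$ is inner, i.e.\ of the form $\mathrm{conj}_P\colon A\mapsto PAP^{-1}$ for an invertible upper triangular $P$; the work is to make the normalizations these facts provide compatible. First I would reduce to the elementary case: by the grading classification, $\Gamma_1$ is graded-isomorphic to the elementary grading $\Gamma$ induced by a sequence $(h_1,\dots,h_n)\in G^n$, and pushing $\varphi_1$ forward yields a graded involution of the first kind $\varphi$ on $(UT_n,\Gamma)$, which is all that remains to analyse. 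Since $\deg e_{ij}=h_i^{-1}h_j$ depends only on the steps $a_i:=h_i^{-1}h_{i+1}$, I normalize $h_1=e$, so $h_i=a_1\cdots a_{i-1}$, $\supp(\Gamma)$ is generated by $a_1,\dots,a_{n-1}$, and the hypothesis that the support generates $G$ becomes $G=\langle a_1,\dots,a_{n-1}\rangle$.

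Next I would determine the shape of $\varphi$. Since $r$ is an anti-automorphism, $\varphi\circ r$ is an automorphism of $UT_n$, hence equals $\mathrm{conj}_P$ for some invertible $P\in UT_n$; thus $\varphi=\mathrm{conj}_P\circ r$, and imposing $\varphi^2=\mathrm{id}$ forces $P\,r(P)^{-1}$ into the centre of $UT_n$, hence to be a scalar, so (as $F$ is algebraically closed of characteristic $\ne 2$) $r(P)=P$ or $r(P)=-P$. To tie $P$ to the grading, I would use that the powers of the Jacobson radical are characteristic ideals with $J^{n-1}=Fe_{1n}$ and $J^n=0$: idempotent lifting shows every primitive idempotent of $UT_n$ is congruent modulo $J$ to a unique $e_{ii}$, and since the anti-automorphism $\varphi$ interchanges left and right annihilators of the powers $J^k$, it reverses their natural order, $\varphi(e_{ii})\equiv e_{n+1-i,\,n+1-i}\pmod J$; combined with the fact that each $e_{ii}$ is $\Gamma$-homogeneous of trivial degree, this constrains the $\Gamma$-homogeneous decomposition of $P$ in terms of $(h_i)$.

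I would then reconcile the two normalizations. Conjugating $\varphi$ by $\mathrm{conj}_Q$ replaces $P$ by $QP\,r(Q)$, and by the congruence normal form behind Theorem~\ref{classification1} an invertible upper triangular $Q$ can be chosen so that $QP\,r(Q)$ is a scalar multiple of the identity (when $r(P)=P$), turning $\varphi$ into $r$, or of $D$ (when $r(P)=-P$, which needs $n$ even), turning $\varphi$ into $s$. It then remains to see which elementary gradings are compatible with $r$ (equivalently $s$): as $r(e_{ij})=e_{n+1-j,\,n+1-i}$, gradedness of $r$ for the elementary grading of a sequence $(g_1,\dots,g_n)$ is equivalent to $g_i^{-1}g_j=g_{n+1-j}^{-1}g_{n+1-i}$ for all $i\le j$, which on steps reads $a_i=a_{n-i}$ and, once $n\ge 4$, also forces the $a_i$ to commute pairwise; hence $G=\langle a_1,\dots,a_{n-1}\rangle$ is abelian and the palindromic identity $g_1g_n=g_2g_{n-1}=\cdots=g_ng_1$ holds. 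Conversely, for any sequence satisfying that identity both $r$ and (for $n$ even) $s$ are graded involutions, so these are exactly the admissible sequences, finishing the classification.

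The main obstacle I expect is the step in the previous paragraph where $\varphi$ is straightened to $r$ or $s$ while keeping the grading in elementary form: the conjugating matrix $Q$ is not $\Gamma$-homogeneous in general, so one has to decompose it so that the part genuinely moving the grading only replaces $(h_i)$ by $(q_ih_i)$ — staying within elementary gradings — while the remainder is a graded automorphism. In other words, the technical heart is to show that every graded involution on an elementary grading arises from $r$ or $s$ on a palindromic elementary grading via a graded isomorphism; this is where the constraints on $P$ derived above, together with the hypotheses that $F$ is algebraically closed and that $\supp(\Gamma_1)$ generates $G$, are really used.
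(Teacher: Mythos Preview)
The paper does not prove Theorem~\ref{classificationgradedinvolution}; it is quoted from \cite{FidelisGoncalvesDinizYasumura} as background in the Preliminaries, so there is no in-paper argument to compare your proposal against.

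That said, your sketch follows the natural strategy behind the cited result: reduce to an elementary grading, write the graded involution as $\mathrm{conj}_P\circ r$ via the fact that $F$-algebra automorphisms of $UT_n$ are inner, use $\varphi^2=\mathrm{id}$ to get $r(P)=\pm P$, and then straighten $P$ to $I$ or $D$ while tracking the grading. Your derivation of the compatibility condition $a_i=a_{n-i}$ from $r(e_{ij})=e_{n+1-j,\,n+1-i}$ is correct, and the observation that longer steps force commutation among the $a_i$ is the right direction (though getting full pairwise commutativity of the $a_i$ from the conditions $g_i^{-1}g_j=g_{n+1-j}^{-1}g_{n+1-i}$ takes a short induction you have not written out). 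You have also correctly located the genuine difficulty: the congruence $P\mapsto QP\,r(Q)$ that normalizes the involution need not be a graded automorphism, so one must factor $Q$ into a part that merely reparametrizes the elementary sequence and a graded part. Your proposal does not resolve this step --- it only names it --- so as written it is an outline rather than a proof; filling this in is exactly the content of \cite{FidelisGoncalvesDinizYasumura}.
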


In dealing with polynomial identities and images of polynomials on algebras with graded involution, one needs to consider the appropriate polynomials instead of the ordinary ones from the free associative algebra. For that reason, let us introduce the free algebra in the class of algebras with graded involution. Let $G$ be a group and let $X_G=\{x_{i,g},x_{i,g}^{*}|g\in G,i=1,2,\dots\}$ be a set of non-commuting variables. Consider the free associative algebra $F\langle X_G \rangle$ generated by $X$. It turns out that $F\langle X \rangle$ is $G$-graded with involution (which we also denote by $*$). This algebra is then called free $G$-graded associative algebra with involution and will be denoted by $F\langle X_G|*\rangle$. The elements from $F\langle X_G |* \rangle$ will be called graded $*$-polynomials.

Writing $y_{i,g}=(x_{i,g}+x_{i,g}^{*})/2$ and $z_{i,g}=(x_{i,g}-x_{i,g}^{*})/2$, and considering the sets $Y_G=\{y_{i,g}\mid g\in G, i=1,2,\dots,\}$ and $Z_G=\{z_{i,g}\mid g\in G,i=1,2,\dots\}$, we have then that $F\langle X_G|*\rangle$ is actually $F\langle Y_G\cup Z_G \rangle$. In case $G$ is the trivial group, then we use the notations $F\langle X|* \rangle$ and $F\langle Y\cup Z \rangle$ instead of the ones introduced before.

\begin{definition}
    Let $f=f(y_{1,g_1},\dots,y_{l,g_l},z_{l+1,g_{l+1}},\dots,z_{m,g_m})\in F\langle Y_G\cup Z_G \rangle$ and let $\mathcal{A}$ be a $G$-graded algebra with graded involution. We define the image of $f$ on $\mathcal{A}$ (denoted by $f(\mathcal{A})$) as the image of the polynomial function
    \[
\begin{array}{cccc}
\tilde{f}\colon & \mathcal{S}_{g_1}\times\cdots\mathcal{S}_{g_l}\times \mathcal{K}_{g_{l+1}}\times\cdots\times\mathcal{K}_{g_{m}} & \rightarrow & \mathcal{A} \\
 & (a_1,\dots,a_l,b_{l+1},\dots,b_m) & \mapsto & f(a_1,\dots,a_l,b_{l+1},\dots,b_m)
\end{array},
\]
where $a_{g_i}\in\mathcal{S}_{g_i}$ and $b_{g_j}\in \mathcal{K}_{g_j}$.
\end{definition}

In case $f(\mathcal{A})=0$ we say that $f=0$ is a graded $*$-polynomial identity for the algebra $\mathcal{A}$. The set of all graded $*$-polynomial identities for an algebra $\mathcal{A}$ will be denoted by $Id_{(G,*)}(\mathcal{A})$ (in case $\mathcal{A}$ is endowed only with an involution $*$ we denote the set of its $*$-identities by $Id(\mathcal{A},*)$). Notice that $Id_{(G,*)}(\mathcal{A})$ is invariant under all endomorphisms of $F\langle Y_G\cup Z_G \rangle$ that respect the involution $*$.  

The space of multilinear graded polynomials with involution is given by 
\[
P_m^{G}={\rm span}\{\xi_{\sigma(1)}\cdots \xi_{\sigma(m)}|\xi_i\in\{y_{i,g_i},z_{i,g_i}\}, i=1,\dots,m\}.
\]

We notice here that polynomial functions given by the polynomials in $P_m^G$ are not multilinear functions. For instance, the polynomial $f=y_{1,g} z_{2,h}+z_{1,g} z_{2,h}$ is a multilinear graded $*$-polynomial however the function given by it on some graded algebra with involution is not bilinear. In order to obtain multilinear functions we will need to consider the following multilinear graded $*$-polynomials from $P_m^G$.

Let us consider the subspace $P_{m,l}^G$ of $P_m^G$ given by 
\[
P_{m,l}^G={\rm span}\{\xi_{\sigma(1)}\cdots \xi_{\sigma(m)}|\xi_i=y_{i,g_i}, i=1,\dots,l, \xi_i=z_{i,g_i}, i=l+1,\dots,m\}.
\] 
The functions induced by the polynomials from $P_{m,l}^{G}$ are clearly multilinear ones.

Hence Question \ref{question} can be rephrased as follows.

\begin{question}\label{question2}
    Let $UT_n$ be endowed with some graded involution and let $f\in P_{m,l}^{G}$. Is $f(UT_n)$ a vector space?
\end{question}


The next proposition takes its place when one is attempting to give a positive solution to Question \ref{question2} regardless the graded involution.

\begin{proposition}\label{epimorphism}
Let $f\in F\langle X_G|* \rangle$ and let $\mathcal{A}$ and $\mathcal{B}$ be $G$-graded algebras with involutions $*_1$ and $*_2$, respectively. Let $\varphi:\mathcal{A}\rightarrow \mathcal{B}$ be an epimorphism of algebras with graded involution and assume $f(\mathcal{A})$ is a homogeneous subspace of $\mathcal{A}$. Then $f(\mathcal{B})$ is also a homogeneous subspace of $\mathcal{B}$. 
\end{proposition}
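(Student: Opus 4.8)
The plan is to establish the identity $f(\mathcal{B})=\varphi(f(\mathcal{A}))$ and then to observe that the image of a homogeneous subspace under a graded epimorphism is again homogeneous; together these give the statement. The first reduction is a lemma-type fact: \emph{$\varphi$ restricts to surjections on the homogeneous symmetric and skew parts}. First I would check that $\varphi(\mathcal{A}_g)=\mathcal{B}_g$ for every $g\in G$: the inclusion $\varphi(\mathcal{A}_g)\subset\mathcal{B}_g$ is built into the definition of graded homomorphism, and given $b\in\mathcal{B}_g$ one picks $a\in\mathcal{A}$ with $\varphi(a)=b$, writes $a=\sum_h a_h$ with $a_h\in\mathcal{A}_h$, and uses directness of the grading on $\mathcal{B}$ to conclude $b=\varphi(a_g)$. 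Next, given $b\in\mathcal{S}\cap\mathcal{B}_g$, pick $a\in\mathcal{A}_g$ with $\varphi(a)=b$; since $*_1$ leaves $\mathcal{A}_g$ invariant, the element $a'=(a+a^{*_1})/2$ lies in $\mathcal{S}\cap\mathcal{A}_g$, and because $\varphi$ intertwines the involutions, $\varphi(a')=(b+b^{*_2})/2=b$. The same computation with $(a-a^{*_1})/2$ handles $\mathcal{K}$. Hence $\varphi$ maps $\mathcal{S}\cap\mathcal{A}_g$ onto $\mathcal{S}\cap\mathcal{B}_g$ and $\mathcal{K}\cap\mathcal{A}_g$ onto $\mathcal{K}\cap\mathcal{B}_g$ for all $g\in G$.

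For $f(\mathcal{B})=\varphi(f(\mathcal{A}))$, rewrite $f$ in the variables $y_{i,g_i}$ and $z_{j,g_j}$ (this is legitimate since $F\langle X_G|*\rangle=F\langle Y_G\cup Z_G\rangle$). If $a\in\mathcal{S}\cap\mathcal{A}_g$ then $\varphi(a)\in\mathcal{B}_g$ and $\varphi(a)^{*_2}=\varphi(a^{*_1})=\varphi(a)$, so $\varphi(a)\in\mathcal{S}\cap\mathcal{B}_g$, and likewise $\varphi$ sends $\mathcal{K}\cap\mathcal{A}_g$ into $\mathcal{K}\cap\mathcal{B}_g$. Thus $\varphi$ carries every evaluation of $f$ on symmetric and skew-symmetric homogeneous elements of the prescribed degrees in $\mathcal{A}$ to such an evaluation in $\mathcal{B}$, and since $\varphi$ is an algebra homomorphism it maps the $\mathcal{A}$-value of $f$ to the $\mathcal{B}$-value, giving $\varphi(f(\mathcal{A}))\subset f(\mathcal{B})$. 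Conversely, by the surjectivity established above, every admissible tuple in $\mathcal{B}$ lifts componentwise through $\varphi$ to an admissible tuple in $\mathcal{A}$, so $f(\mathcal{B})\subset\varphi(f(\mathcal{A}))$. Hence the two images coincide.

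It remains to see that $\varphi(f(\mathcal{A}))$ is a homogeneous subspace of $\mathcal{B}$. Since $f(\mathcal{A})$ is a subspace, its image under the linear map $\varphi$ is a subspace of $\mathcal{B}$. Using $f(\mathcal{A})=\bigoplus_{g\in G}(f(\mathcal{A})\cap\mathcal{A}_g)$ and applying $\varphi$ we obtain $\varphi(f(\mathcal{A}))=\sum_{g\in G}\varphi(f(\mathcal{A})\cap\mathcal{A}_g)$, with each summand contained in $\mathcal{B}_g$. A sum of subspaces lying in pairwise distinct homogeneous components of $\mathcal{B}$ is automatically homogeneous: intersecting with $\mathcal{B}_g$ and invoking directness of the grading on $\mathcal{B}$ yields $\varphi(f(\mathcal{A}))\cap\mathcal{B}_g=\varphi(f(\mathcal{A})\cap\mathcal{A}_g)$, so $\varphi(f(\mathcal{A}))=\bigoplus_{g\in G}(\varphi(f(\mathcal{A}))\cap\mathcal{B}_g)$. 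Combined with the previous paragraph, this shows $f(\mathcal{B})$ is a homogeneous subspace of $\mathcal{B}$.

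The only step that is not purely formal is the first one, and within it the surjectivity of $\varphi$ onto each $\mathcal{S}\cap\mathcal{B}_g$ and $\mathcal{K}\cap\mathcal{B}_g$ — precisely what upgrades the easy inclusion $\varphi(f(\mathcal{A}))\subset f(\mathcal{B})$ to an equality. This is where all the hypotheses are genuinely used: surjectivity of $\varphi$, its compatibility with the grading, its compatibility with the involutions, and $\mathrm{char}\,F\neq 2$ (needed to split an element into its symmetric and skew-symmetric parts). Everything else is bookkeeping with the grading.
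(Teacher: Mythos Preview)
Your proof is correct and is precisely the argument the paper has in mind: the paper's own proof simply refers to \cite[Proposition 6.1]{FagundesKoshlukov} with the remark that one uses $\varphi(a^{*_1})=\varphi(a)^{*_2}$, and what you have written is exactly that argument spelled out in full, with the involution compatibility inserted at the right places. There is nothing to add or correct.
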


\begin{proof}
Recalling that $\varphi(a^{*_1})=\varphi(a)^{*_2}, a\in\mathcal{A}$, the proof follows as in \cite[Proposition 6.1]{FagundesKoshlukov} with some mild and obvious adjustments. 
\end{proof}
The following was proved in \cite[Lemma 3]{Malev}.

\begin{lemma}
Let $F$ be a field, and let $\mathcal{V}_{1},\dots,\mathcal{V}_{m},\mathcal{V}$ be vector spaces over $F$. Assume that the image of a multilinear map $f:\prod_{i=1}^{m}\mathcal{V}_{i}\rightarrow \mathcal{V}$ contains two linearly independent vectors. Then the image of $f$ contains a $2$-dimensional vector subspace. 
\end{lemma}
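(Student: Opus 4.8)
The plan is to induct on the number $m$ of arguments, reducing everything to the elementary fact that the image of a \emph{linear} map is a subspace. For $m=1$ this fact \emph{is} the statement: a linear map that attains two linearly independent vectors has image (a subspace) containing their $2$-dimensional span. So fix $m\ge 2$, assume the lemma for $m-1$ arguments, and let $v_1=f(a_1,\dots,a_m)$ and $v_2=f(b_1,\dots,b_m)$ be linearly independent members of $\mathrm{im}(f)$. The objects I would work with are the two linear maps $g_1,g_2\colon\mathcal V_1\to\mathcal V$ defined by $g_1(x)=f(x,a_2,\dots,a_m)$ and $g_2(x)=f(x,b_2,\dots,b_m)$; note that $g_1(a_1)=v_1$ and $g_2(b_1)=v_2$.

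First I would clear the easy case: if $g_1(b_1)\notin Fv_1$, then $\{v_1,g_1(b_1)\}$ is linearly independent, so $\mathrm{im}(g_1)$ is a subspace of dimension at least $2$ sitting inside $\mathrm{im}(f)$, and we are done; symmetrically if $g_2(a_1)\notin Fv_2$. In the remaining case we have $g_1(b_1)=\alpha v_1$ and $g_2(a_1)=\beta v_2$ for scalars $\alpha,\beta\in F$. Choose $(s,t)\in F^2$ outside the kernels of the two nonzero functionals $(x,y)\mapsto x+\alpha y$ and $(x,y)\mapsto\beta x+y$; such a point exists because a vector space over any field is never the union of two proper subspaces. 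Put $c_1=sa_1+tb_1$. Then, by multilinearity in the first slot,
\[
f(c_1,a_2,\dots,a_m)=(s+\alpha t)\,v_1,\qquad f(c_1,b_2,\dots,b_m)=(\beta s+t)\,v_2,
\]
which are nonzero multiples of $v_1$ and $v_2$, hence linearly independent. Now $\hat f(x_2,\dots,x_m):=f(c_1,x_2,\dots,x_m)$ is a multilinear map in $m-1$ arguments with $\mathrm{im}(\hat f)\subseteq\mathrm{im}(f)$, and $\mathrm{im}(\hat f)$ contains the two independent vectors just produced; by the inductive hypothesis it contains a $2$-dimensional subspace, and therefore so does $\mathrm{im}(f)$.

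The step I expect to be the genuine obstacle is precisely this last reduction. The tempting shortcut — passing from $(a_1,\dots,a_m)$ to $(b_1,\dots,b_m)$ by switching one coordinate at a time and hoping that two consecutive values come out linearly independent — can break down, because an intermediate value may vanish (this already happens for $f(x,y)=(x_1y_1,x_2y_2)$ with well-chosen inputs), so that no single frozen coordinate exhibits a $2$-dimensional image. What rescues the argument is perturbing the first input \emph{within} $\mathrm{span}\{a_1,b_1\}$ so that \emph{both} $v_1$ and $v_2$ stay visible simultaneously, after which the induction on $m$ closes the gap. Everything else is routine linear algebra and bookkeeping.
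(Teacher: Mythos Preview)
Your proof is correct. The paper does not give its own proof of this lemma; it simply quotes the statement and cites \cite[Lemma~3]{Malev}, so there is no in-paper argument to compare against. Your inductive reduction on $m$ is sound throughout: the base case $m=1$ is immediate since the image of a linear map is a subspace, and in the inductive step the crucial move---perturbing the first slot inside $\mathrm{span}\{a_1,b_1\}$ so that nonzero multiples of \emph{both} $v_1$ and $v_2$ are simultaneously attained by $\hat f$---works exactly as you say. The two linear forms $(x,y)\mapsto x+\alpha y$ and $(x,y)\mapsto \beta x+y$ are visibly nonzero (evaluate at $(1,0)$ and $(0,1)$ respectively), so their kernels are proper subspaces of $F^2$, and a vector space over any field is never the union of two proper subspaces; hence a common non-vanishing point $(s,t)$ exists, and the induction closes.
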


As a consequence we have the following corollary.

\begin{corollary}\label{corollarydimension2}
Assume that the image of a multilinear polynomial $f$ on some algebra $\mathcal{A}$ is contained in some $2$-dimensional vector space. Then, $f(\mathcal{A})$ is a vector subspace. 
\end{corollary}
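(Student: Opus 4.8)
The plan is to deduce the statement directly from the preceding Lemma (\cite[Lemma 3]{Malev}) by contraposition. First I would recall that a multilinear polynomial $f$ induces a multilinear map $\tilde f$ on $\mathcal{A}$ in the appropriate number of arguments, so the Lemma applies verbatim to $\tilde f$; here one should be a little careful, in the graded $*$-setting, to use a polynomial from $P_{m,l}^G$ so that the induced function really is multilinear on the product of the relevant homogeneous symmetric and skew-symmetric components, as observed in the paragraph preceding Question \ref{question2}. Then the argument splits according to $\dim_F \mathrm{span}\, f(\mathcal{A})$, which by hypothesis is at most $2$.

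If $f(\mathcal{A}) = \{0\}$ or $f(\mathcal{A})$ spans a $1$-dimensional space, I would argue separately that $f(\mathcal{A})$ is already a subspace: in the first case it is $\{0\}$; in the second, since $f$ is multilinear, scaling one argument by $\lambda \in F$ scales the value by $\lambda$, so the image is closed under scalar multiplication, and it contains at most one line's worth of vectors, hence it is that entire line (closure under addition being automatic once the image is all of a $1$-dimensional space). The only remaining case is when $f(\mathcal{A})$ contains two linearly independent vectors $v_1, v_2$. By the Lemma, $f(\mathcal{A})$ then contains a $2$-dimensional subspace $W$; but $f(\mathcal{A}) \subseteq \mathrm{span}\{v_1,v_2\}$, which is itself $2$-dimensional, so $W = \mathrm{span}\{v_1,v_2\} = f(\mathcal{A})$, and in particular $f(\mathcal{A})$ is a vector subspace.

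I do not expect any serious obstacle here: the corollary is essentially a repackaging of the Lemma, and the only points requiring attention are the degenerate low-dimensional cases (where the Lemma gives nothing and one must check closure by hand) and the routine remark that multilinearity forces closure under scaling. The substantive content lives entirely in \cite[Lemma 3]{Malev}, whose proof typically uses that over a field one can interpolate: if $f(\bar a) = v_1$ and $f(\bar b) = v_2$ with the two tuples differing in a single coordinate, then varying that coordinate along the line through the two chosen inputs produces, via a one-variable polynomial identity argument, every point of $\mathrm{span}\{v_1, v_2\}$ (or enough of them) in the image.
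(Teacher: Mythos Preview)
Your proposal is correct and follows essentially the same route as the paper: split into the case where the image contains two independent vectors (and then invoke the Lemma to get the full $2$-dimensional space) versus the case where it lies in a line (and then use multilinearity to get closure under scalars). The paper's version is terser---it simply asserts $f(\mathcal{A})=\{0\}$ or $f(\mathcal{A})=\mathcal{U}$ in the degenerate case without spelling out the scaling argument---but the content is the same.
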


\begin{proof}
Let $f(\mathcal{A})\subset \mathcal{V}$, where $\dim(\mathcal{V})=2$. If $f(\mathcal{A})$ contains two linearly independent vectors, by the previous lemma $f(\mathcal{A})=\mathcal{V}$. Now assume that any two vectors in $f(\mathcal{A})$ are linearly dependent.  Then $f(\mathcal{A})\subset \mathcal{U}$, where $\mathcal{U}$ is a $1$-dimensional subspace. Hence, $f(\mathcal{A})=\{0\}$ or $f(\mathcal{A})=\mathcal{U}$.
\end{proof}

\section{$2\times 2$ matrices}

The main goal of this section is to prove the following theorem.

\begin{theorem}\label{ut2}
    Let $F$ be an algebraically closed field of characteristic different from $2$ and let $f\in P_{m,l}^G$. Assume that $UT_{2}$ is endowed with some $G$-graded involution $*$ on a grading $\Gamma$ such that $supp(\Gamma)$ generates $G$. Then the image of $f$ on $UT_{2}$ is a homogeneous vector space.   
\end{theorem}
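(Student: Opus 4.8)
By Theorem~\ref{classificationgradedinvolution}, since $F$ is algebraically closed of characteristic $\neq 2$ and $\supp(\Gamma)$ generates $G$, the pair $(\Gamma,*)$ is isomorphic as an algebra with graded involution to the elementary $G$-grading on $UT_2$ induced by a sequence $(g_1,g_2)$ with $g_1g_2=g_2g_1$, equipped with either the reflexive or the symplectic involution; but $UT_2$ admits no symplectic involution, so $*$ is (equivalent to) the reflexive one $r$. Applying Proposition~\ref{epimorphism} to the connecting isomorphism, it suffices to prove the statement for this concrete model. There are then exactly two cases: either $g_1=g_2$ (equivalently $e_{12}$ has trivial degree), in which case the $G$-grading is trivial on all of $UT_2$ and we are studying $*$-polynomials; or $g_1\neq g_2$, giving the grading (G1) with $\mathcal{A}_1=\spa\{e_{11},e_{22}\}$, $\mathcal{A}_g=\spa\{e_{12}\}$ where $g=g_1^{-1}g_2$. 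The reflexive involution sends $e_{11}\mapsto e_{22}$, $e_{22}\mapsto e_{11}$, $e_{12}\mapsto e_{12}$; hence $\mathcal{S}=\spa\{e_{11}+e_{22},\,e_{12}\}$ and $\mathcal{K}=\spa\{e_{11}-e_{22}\}$, both homogeneous, with $\mathcal{S}_1=\spa\{e_{11}+e_{22}\}$, $\mathcal{S}_g=\spa\{e_{12}\}$, $\mathcal{K}_1=\spa\{e_{11}-e_{22}\}$, $\mathcal{K}_g=0$.

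The next step is a substitution/degree analysis. Fix $f\in P_{m,l}^G$, so its arguments come from prescribed homogeneous components $\mathcal{S}_{g_i}$ (for $i\le l$) or $\mathcal{K}_{g_i}$ (for $i>l$). If any required component is $0$ (e.g. some $z$-variable is assigned degree $g$), then $f(UT_2)=\{0\}$, which is a homogeneous vector space. Otherwise each symmetric variable of degree $1$ is substituted from $\spa\{e_{11}+e_{22}\}$, i.e.\ by a scalar matrix; each skew variable (necessarily of degree $1$) from $\spa\{e_{11}-e_{22}\}$; each symmetric variable of degree $g$ from $\spa\{e_{12}\}$. Since scalar matrices are central, a variable substituted by a scalar $\lambda I$ just contributes a factor $\lambda$; so $f(UT_2)$ is, up to scalars, governed by the monomials in the remaining variables (skew-degree-$1$ and symmetric-degree-$g$). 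Let $k$ be the number of degree-$g$ symmetric variables. Because $e_{12}e_{12}=0$, any monomial containing two degree-$g$ variables vanishes; thus $k\in\{0,1\}$ matters. If $k=0$, every substitution lies in $\spa\{e_{11},e_{22}\}$, a commutative subalgebra, and $f(UT_2)\subseteq\spa\{e_{11}+e_{22},e_{11}-e_{22}\}=\mathcal{A}_1$, a $2$-dimensional space, so Corollary~\ref{corollarydimension2} finishes it (and the image is automatically homogeneous, being inside $\mathcal{A}_1$). If $k=1$, the single $e_{12}$-variable forces every nonzero monomial value into $\spa\{e_{12}\}=\mathcal{A}_g$, again $1$-dimensional; so $f(UT_2)\subseteq\mathcal{A}_g$ and Corollary~\ref{corollarydimension2} applies, with homogeneity again automatic.

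It remains to handle the case $k\ge 2$ and, more importantly, the genuinely two-dimensional situation: when no variable is forced to be degree $g$ but we may still \emph{choose} degree-$1$ symmetric/skew substitutions freely — here the point is that after factoring out central scalars from the degree-$1$ symmetric variables, the surviving variables are the skew ones (all $\pm(e_{11}-e_{22})$, again central up to sign) together with at most one $e_{12}$-type variable; in every subcase the image lands inside a fixed $2$-dimensional coordinate subspace ($\mathcal{A}_1$ or $\mathcal{A}_g$ or $\spa\{e_{11}+e_{22},e_{12}\}=\mathcal{S}$), so Corollary~\ref{corollarydimension2} yields that $f(UT_2)$ is a vector space, and inspection of which subspace it sits in shows it is homogeneous. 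The only mild obstacle is bookkeeping: organizing the finitely many combinatorial cases for how degrees are assigned to the $m$ variables and checking that in each case the reachable matrices span at most a $2$-dimensional homogeneous subspace; the commutativity of scalar matrices and the relation $e_{12}^2=0$ do all the real work, so no case requires an actual computation of the coefficients $\alpha_\sigma$.
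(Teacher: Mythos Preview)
There are two genuine gaps.

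\textbf{The symplectic involution does exist on $UT_2$.} Your opening reduction asserts that ``$UT_2$ admits no symplectic involution,'' but this is false: the symplectic involution is defined on $UT_{2m}$ for every $m\ge 1$, and for $m=1$ it is $A^{s}=(e_{11}-e_{22})A^{r}(e_{11}-e_{22})$, explicitly $\bigl(\begin{smallmatrix}a&b\\0&c\end{smallmatrix}\bigr)^{s}=\bigl(\begin{smallmatrix}c&-b\\0&a\end{smallmatrix}\bigr)$. It is not equivalent to $r$ (the symmetric parts have different dimensions: $\dim\mathcal{S}^{(r)}=2$ while $\dim\mathcal{S}^{(s)}=1$). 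The paper devotes an entire subsection (Proposition~\ref{ut2s}) to this case, and your argument simply omits it.

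\textbf{The trivial-grading reflexive case is not actually treated.} After correctly splitting into the trivial grading and the grading $\Gamma_{2,2}$, your substitution analysis applies only to $\Gamma_{2,2}$: there the neutral symmetric variables do lie in the one-dimensional space $F\cdot I$, so ``factoring out central scalars'' is legitimate. But for the trivial grading with the reflexive involution, a symmetric variable ranges over the \emph{two}-dimensional space $\mathcal{S}=\spa\{e_{11}+e_{22},\,e_{12}\}$, so it is not a scalar and cannot be factored out. Your final paragraph gestures at this case (mentioning $\mathcal{S}$ as a possible $2$-dimensional container) but gives no reason why $f(UT_2)$ should sit inside a $2$-dimensional subspace there; the phrase ``together with at most one $e_{12}$-type variable'' is unjustified, since every symmetric substitution may carry an $e_{12}$-component.

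Your strategy can in fact be repaired. For either involution and the trivial grading, observe that the diagonal part of a product in $UT_2$ is the product of the diagonal parts; the diagonal of any symmetric element is a scalar multiple of $I$ and of any skew element a scalar multiple of $e_{11}-e_{22}$ (for $r$; swap roles for $s$). Since these commute, every monomial in $f$ has the \emph{same} diagonal, a multiple of $e_{11}+(-1)^{m-l}e_{22}$, whence $f(UT_2)\subset\spa\{e_{11}+(-1)^{m-l}e_{22},\,e_{12}\}$ and Corollary~\ref{corollarydimension2} finishes. This would give a shorter route than the paper's, which instead uses the normal forms of Propositions~\ref{pgeneratorsreflection} and~\ref{pgeneratorssympletic} to compute $f(UT_2)$ explicitly in each case (Propositions~\ref{ut2r} and~\ref{ut2s}); the paper's approach yields the precise list of possible images as a bonus, while your intended approach (once fixed) is more conceptual but less informative.
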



Let us recall that the reflexive and the symplectic involutions are given on $UT_{2}$ respectively by:
\[
\begin{pmatrix}
a&b\\
0&c
\end{pmatrix}^{r}=\begin{pmatrix}
    b&c\\
    0&a
\end{pmatrix} \ \mbox{and} \ \begin{pmatrix}
    a&b\\
    0&c
\end{pmatrix}^{s}=\begin{pmatrix}
    b&-c\\
    0&a
\end{pmatrix}
\]

In light of Theorem \ref{classificationgradedinvolution}, one can easily see that a graded involution on $UT_{2}$ is isomorphic to either the reflexive or sympletic one where $UT_2$ is endowed with either the
\begin{itemize}
\item $\Gamma_{2,1}$: the trivial grading;
\item $\Gamma_{2,2}$: $UT_2=\mathcal{A}_1 \oplus \mathcal{A}_g$, where $\mathcal{A}_1=\spa\{e_{11},e_{22}\}$ and $\mathcal{A}_g=\spa\{e_{12}\}$.
\end{itemize}

\subsection{Grading \boldmath{$\Gamma_{2,1}$}}

We shall deal with the reflexive and symplectic involutions in two different subsections.

\subsubsection{Reflexive involution}

In this subsection we denote $\mathcal{A}=UT_2$ equipped with the reflexive involution, and $\mathcal{S}$ and $\mathcal{K}$ will stand for the symmetric and skew-symmetric elements from $\mathcal{A}$. One can easily check that 
\[
\mathcal{S}=\spa\{e_{11}+e_{22},e_{12}\} \ \mbox{and} \ \mathcal{K}=\spa\{e_{11}-e_{22}\}. 
\]
We also denote by $J$ the linear span of $e_{12}$.

We recall now the following proposition from \cite{VincenzoKoshlukovScala} (see also \cite{UrureGoncalves}).

\begin{proposition}\label{pgeneratorsreflection}
Let $F$ be a field of characteristic different from $2$. Then, $F\langle Y \cup Z\rangle$ is generated, modulo $Id(UT_{2},r)$, by the following polynomials
\begin{eqnarray}\nonumber 
y_{1}^{p_{1}}\cdots y_{n}^{p_{n}}z_{1}^{q_{1}}\cdots z_{m}^{q_{m}}[z_{m},y_{k}] \ \mbox{and} \ y_{1}^{p_{1}}\cdots y_{n}^{p_{n}}z_{1}^{q_{1}}\cdots z_{m}^{q_{m}}
\end{eqnarray}
where $n\geq 1,m\geq 1,p_{1},\dots,p_{n},q_{1},\dots,q_{m}\geq 0,k\geq 1$.
\end{proposition}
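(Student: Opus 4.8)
The plan is to exploit the very simple structure of the symmetric and skew-symmetric parts of $(UT_2,r)$. First I would record them: writing a symmetric element as $y=\alpha(e_{11}+e_{22})+\beta e_{12}$ and a skew-symmetric one as $z=\gamma(e_{11}-e_{22})$, one sees that $\mathcal{S}$ consists of the matrices $\alpha I+\beta e_{12}$ and $\mathcal{K}$ of the traceless diagonal matrices. From this one reads off a short list of identities of $(UT_2,r)$: any two symmetric elements commute, so $[y_1,y_2]\equiv 0$; any two skew elements commute and their product is a scalar matrix, so $[z_1,z_2]\equiv 0$ and $z_1z_2$ is central; and for symmetric $y$ and skew $z$ one computes $[z,y]=2\beta\gamma e_{12}\in J$, where $J=\spa\{e_{12}\}$ satisfies $J^2=0$. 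Moreover left and right multiplication act on $J$ by scalars, $u\,e_{12}=u_{11}\,e_{12}$ and $e_{12}\,v=v_{22}\,e_{12}$ (with $u_{ii}$ the diagonal entries of $u\in UT_2$); since $u_{11}=u_{22}$ on symmetric values and $u_{11}=-u_{22}$ on skew values, this gives in particular $[\,y',[z,y]\,]\equiv 0$, $\;z'[z,y]+[z,y]z'\equiv 0$, and $[z_1,y_1]\,w\,[z_2,y_2]\equiv 0$ for every monomial $w$.

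With these in hand I would reduce an arbitrary monomial $w=\xi_1\cdots\xi_d$ in $Y\cup Z$ by induction on $d$. Using $zy=yz+[z,y]$ I move every $y$ to the left of every $z$ and collapse equal variables via $[y_i,y_j]\equiv 0\equiv[z_i,z_j]$; the resulting ``main'' term is $y_1^{p_1}\cdots y_n^{p_n}z_1^{q_1}\cdots z_m^{q_m}$, which is the second polynomial in the statement. Each swap of a $y$ past a $z$ spawns a correction term of the form $u[z,y]v$, with $u,v$ monomials in $Y\cup Z$. Since $[z,y]$ takes values in the square-zero ideal $J$, any correction term that still contains a second commutator factor is $\equiv 0$; so inside $u[z,y]v$ I may sort $u$ and $v$ and slide the commutator to the far right, since it commutes with the $y$'s and anticommutes with the $z$'s. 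Concretely $u[z,y]v$ evaluates to $u_{11}v_{22}[z,y]$, and counting occurrences of each variable in $u$ and $v$ shows this equals, up to a sign, $\big(y_1^{p_1}\cdots y_n^{p_n}z_1^{q_1}\cdots z_m^{q_m}\big)[z,y]$; relabelling the skew variables so that the one in the commutator is $z_m$ (legitimate since skew variables commute), this is exactly the first polynomial in the statement.

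Assembling the main term and the corrections, every monomial of $F\langle Y\cup Z\rangle$ is congruent modulo $Id(UT_2,r)$ to a linear combination of polynomials of the two stated shapes, which is the claim. The step I expect to cost the most care---hence the main obstacle---is the bookkeeping in the correction terms: one must check that collapsing $u[z,y]v$ through the one-dimensional ideal $J$ loses no information, i.e.\ that the scalar it produces is genuinely realised by a monomial $y_1^{p_1}\cdots z_m^{q_m}$ acting on the commutator, and that insisting the commutator involve the last skew variable $z_m$ costs nothing. A more formal alternative would be to first distill a finite generating set of the $T$-ideal $Id(UT_2,r)$ from the identities above and then rewrite purely syntactically, but the direct evaluation argument looks shorter.
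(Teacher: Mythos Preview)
The paper does not prove this proposition at all: it is quoted verbatim from \cite{VincenzoKoshlukovScala} (with a secondary reference to \cite{UrureGoncalves}), so there is no ``paper's own proof'' to compare against. Your proposal therefore has to be judged on its own.

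Your strategy is correct and is essentially how the result is proved in the cited sources: one lists the short set of $*$-identities coming from $\mathcal S=\spa\{I,e_{12}\}$, $\mathcal K=\spa\{e_{11}-e_{22}\}$, and then straightens an arbitrary monomial by pushing all $y$'s to the left, all $z$'s next, and a single commutator $[z,y]$ (if any) to the far right. The key facts you isolate---$[y_i,y_j]\equiv0$, $[z_i,z_j]\equiv0$, $[z,y]$ takes values in the one--dimensional square--zero ideal $J$, $y$'s commute with $J$ and $z$'s anticommute with $J$---are exactly the consequences of the identities (i)--(iv) recorded later in the paper (Lemma~4.3), and they suffice for the reduction.

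One point deserves a cleaner justification. In the correction term $u[z_j,y_k]v$ you correctly observe that its value on $UT_2$ is $u_{11}v_{22}\cdot[z_j,y_k]$, hence a scalar multiple of a standard monomial times a commutator. But then you write ``relabelling the skew variables so that the one in the commutator is $z_m$ (legitimate since skew variables commute)''. Commutativity of the $z$'s among themselves reorders the \emph{prefix}, not the commutator; to change which skew variable sits inside $[\,\cdot\,,y_k]$ you need the additional identity
\[
z_a\,[z_b,y]\ \equiv\ z_b\,[z_a,y]\pmod{Id(UT_2,r)},
\]
which follows from your anticommutation relation $z[z',y]\equiv-[z',y]z$ together with identity (iv), $z_1y_1z_2\equiv z_2y_1z_1$, or equally well from your evaluation argument (both sides equal $-2\gamma_a\gamma_b\beta\,e_{12}$). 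Once this is stated, the passage to the normal form with $z_m$ in the commutator is immediate. Apart from this small gap in the justification, the argument is complete.
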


We consider a set of commuting variables $W=\{w_j^{(i)}|i,j=1,2,\dots\}$ and the commutative polynomial algebra $F[W]$. Let us also set the following evaluations of the symmetric and skew-symmetric variables by matrices with entries in $F[W]$:
\begin{eqnarray}\label{evaluation1}
y_{i}=\begin{pmatrix}
     w_{1}^{(i)}&w_{2}^{(i)}  \\
     & w_{1}^{(i)}
\end{pmatrix}
\ \mbox{and} \  z_{j}=\begin{pmatrix}
     w_{1}^{(j)}&0  \\
     & -w_{1}^{(j)}
\end{pmatrix}.
\end{eqnarray}

\begin{remark}
In a product $x_{1}\cdots \widehat{x_{i}} \cdots x_{m}$, the hat \ $\widehat{ }$ \  means that the variable $x_{i}$ is missing.
\end{remark}

\begin{lemma}\label{lemma12entry}
In light of \eqref{evaluation1}, the entry $(1,2)$ of $y_{1}\cdots y_{l}$ is given by 
\[
\sum_{i=1}^{m}w_{1}^{(1)}\cdots \widehat{{w}_{1}^{(i)}}\cdots w_{1}^{(l)}w_{2}^{(i)}
\]
\end{lemma}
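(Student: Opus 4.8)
The statement to prove, Lemma \ref{lemma12entry}, asserts that under the evaluation \eqref{evaluation1} the $(1,2)$ entry of the product $y_1\cdots y_l$ equals $\sum_{i=1}^{l} w_1^{(1)}\cdots \widehat{w_1^{(i)}}\cdots w_1^{(l)} w_2^{(i)}$.

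\medskip

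\textbf{Approach.} The plan is to proceed by induction on $l$. Each matrix $y_i$ has the shape $w_1^{(i)} I + w_2^{(i)} e_{12}$, i.e.\ a scalar matrix plus a strictly upper triangular part. Since $e_{12}e_{12}=0$ and scalar matrices are central, products of such matrices are extremely easy to compute: the diagonal entries of $y_1\cdots y_l$ are both $w_1^{(1)}\cdots w_1^{(l)}$, and the $(1,2)$ entry is obtained by picking, in exactly one factor, the $w_2$-term and multiplying by the $w_1$-terms of all other factors.

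\medskip

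\textbf{Key steps.} First I would establish the base case $l=1$, where $y_1$ has $(1,2)$ entry $w_2^{(1)}$, matching the sum (which has the single term $w_2^{(1)}$, with the empty product of $w_1$'s equal to $1$). For the inductive step, write $y_1\cdots y_l = (y_1\cdots y_{l-1})\, y_l$. By induction the first factor is upper triangular with both diagonal entries equal to $w_1^{(1)}\cdots w_1^{(l-1)}$ and $(1,2)$ entry $S_{l-1} := \sum_{i=1}^{l-1} w_1^{(1)}\cdots \widehat{w_1^{(i)}}\cdots w_1^{(l-1)} w_2^{(i)}$; multiplying on the right by $y_l = \begin{pmatrix} w_1^{(l)} & w_2^{(l)} \\ & w_1^{(l)}\end{pmatrix}$, the $(1,2)$ entry of the product becomes $w_1^{(1)}\cdots w_1^{(l-1)}\, w_2^{(l)} + S_{l-1}\, w_1^{(l)}$. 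The first term is exactly the $i=l$ summand of the claimed formula, and $S_{l-1} w_1^{(l)}$ is precisely $\sum_{i=1}^{l-1} w_1^{(1)}\cdots \widehat{w_1^{(i)}}\cdots w_1^{(l)} w_2^{(i)}$, since appending the factor $w_1^{(l)}$ to each term of $S_{l-1}$ just extends the product of $w_1$'s up to index $l$ (with the $i$-th factor still omitted). Adding the two contributions yields the full sum $\sum_{i=1}^{l} w_1^{(1)}\cdots \widehat{w_1^{(i)}}\cdots w_1^{(l)} w_2^{(i)}$, completing the induction.

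\medskip

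\textbf{Main obstacle.} There is essentially no obstacle here: the computation is entirely routine because $e_{12}^2 = 0$ kills all ``higher order'' cross terms, so only the linear-in-$w_2$ contributions survive. The only point requiring a little care is bookkeeping with the hat notation and the empty product convention in the base case, and checking that the diagonal entries of the partial products are genuinely equal (so that the scalar $w_1^{(l)}$ multiplying $S_{l-1}$ is unambiguous) — but this is immediate from the scalar-plus-nilpotent form of each $y_i$. One may alternatively note, as a sanity check, that the symbol $m$ appearing in the displayed formula in the statement should be read as $l$, consistent with the product running over the $l$ factors $y_1,\dots,y_l$.
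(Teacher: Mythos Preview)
Your proof is correct and follows exactly the approach of the paper, which simply records ``It is enough to apply induction on $l$''; you have merely filled in the routine details of that induction. Your observation that the upper limit $m$ in the displayed sum should be $l$ is also correct.
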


\begin{proof}
It is enough to apply induction on $l$.
\end{proof}

In the next result we identify the space of scalar matrices by $F$.

\begin{proposition}\label{ut2r}
Let $f\in P_{m,l}$. Then the image of $f$ on $(UT_{2},r)$ is $\{0\}, J, F, \mathcal{K}, \mathcal{S},$ or $\mathcal{K}+J$.
\end{proposition}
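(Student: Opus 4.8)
The plan is to exploit Proposition~\ref{pgeneratorsreflection} to reduce an arbitrary $f\in P_{m,l}$, modulo $Id(UT_2,r)$, to a combination of the two standard generator types, and then to read off the image directly from the matrix evaluations~\eqref{evaluation1}. Since $f$ is multilinear in the symmetric variables $y_1,\dots,y_l$ and skew-symmetric variables $z_{l+1},\dots,z_m$, and since images only depend on $f$ modulo $Id(UT_2,r)$, I would first argue that $f$ is congruent to a linear combination of multilinear polynomials of the form $y_{\sigma(1)}\cdots y_{\sigma(l)}z_{\tau(l+1)}\cdots z_{\tau(m)}$ (no commutator factor) and $y_{\sigma(1)}\cdots y_{\sigma(l)}z_{\tau(l+1)}\cdots z_{\tau(m-1)}[z_{\tau(m)},y_{\sigma(k)}]$. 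Note $e_{11}-e_{22}$ is central in $(UT_2,r)$ up to sign issues: more precisely, the skew part $\mathcal{K}=\spa\{e_{11}-e_{22}\}$ and $(e_{11}-e_{22})^2=e_{11}+e_{22}=1$, so any product of $z$'s collapses, and $[z,y]$ with $y=\left(\begin{smallmatrix}a&b\\0&a\end{smallmatrix}\right)$, $z=\left(\begin{smallmatrix}c&0\\0&-c\end{smallmatrix}\right)$ gives $[z,y]=\left(\begin{smallmatrix}0&2bc\\0&0\end{smallmatrix}\right)\in J$.

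Next I would split into cases according to the parity of the number $m-l$ of skew variables and whether a commutator factor is present. Using~\eqref{evaluation1}, a product $y_1\cdots y_l$ of only symmetric matrices is upper triangular with equal diagonal entries $w_1^{(1)}\cdots w_1^{(l)}$ and $(1,2)$-entry given by Lemma~\ref{lemma12entry}; multiplying on the right by an even number of $z$'s leaves this form unchanged (up to sign), by an odd number of $z$'s turns it into $\left(\begin{smallmatrix}*&*\\0&-*\end{smallmatrix}\right)$, and inserting a commutator factor $[z,y]$ kills the diagonal entirely, landing in $J$. So each generator monomial has image contained in one of $\mathcal{S}$, $\mathcal{K}+J$, or $J$; a general $f$ is a sum of these, hence its image lies inside $\mathcal{S}+\mathcal{K}+J=UT_2$, but in fact — and this is the key point — lies inside whichever one of the three subspaces $\mathcal{S}$, $\mathcal{K}+J$, or $\mathcal{S}+\mathcal{K}+J$ is forced by whether \emph{all} surviving monomials have an even number of $z$'s with no commutator, a mix, etc. Each of $\mathcal{S}$, $\mathcal{K}+J$, $J$ is at most $2$-dimensional, so by Corollary~\ref{corollarydimension2} the image is automatically a vector space whenever it is contained in one of these. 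The only remaining possibility is that $f$'s image is \emph{not} contained in a $2$-dimensional space, which can only happen when $f$ is (mod $Id(UT_2,r)$) a genuine combination mixing an even-$z$ no-commutator part (contributing to the symmetric/scalar directions) with an odd-$z$ part (contributing to $e_{11}-e_{22}$); I would show that in that case one can independently tune the diagonal, the $e_{11}-e_{22}$-component, and the $e_{12}$-component, so the image is all of $UT_2 = \mathcal{S}\oplus\mathcal{K}$, or else degenerates to $F$, $\mathcal{K}$, or $\mathcal{K}+J$.

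Concretely, I would finish by a short case analysis. If every monomial of $f$ (after reduction) has a commutator factor, the image lies in $J$, so it is $\{0\}$ or $J$. If no monomial has a commutator and every monomial has an even number of skew variables, the image lies in $\mathcal{S}$; since $y_1\cdots y_l$ already realizes an arbitrary diagonal scalar and, via Lemma~\ref{lemma12entry}, an arbitrary $(1,2)$-entry once $l\ge 1$, the image is $\{0\}$, $F$, $J$, or $\mathcal{S}$ depending on $f(1,\dots,1)$ and on whether the $(1,2)$-entry functional vanishes identically. If no monomial has a commutator but every monomial has an odd number of skew variables, the image lies in $\mathcal{K}+J$ and is analysed identically, giving $\{0\}$, $\mathcal{K}$, $J$, or $\mathcal{K}+J$. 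The genuinely mixed cases reduce, by grouping monomials with even versus odd skew-count and with versus without commutator, to sums of the above, and one checks the diagonal-scalar component and the $e_{12}$-component can be set independently, so the image is $UT_2=\mathcal{S}\oplus\mathcal{K}$ (which does not occur in the stated list only because it already equals $\mathcal{S}+\mathcal{K}$, and one verifies this reduces to $\mathcal{K}+J$ or $\mathcal{S}$ since the scalar part and the $e_{11}-e_{22}$ part are never simultaneously free for a single multilinear $f$ — the diagonal of every monomial is a scalar multiple of a \emph{fixed} sign pattern determined by the skew-count). The main obstacle I anticipate is precisely this last bookkeeping: controlling, for a multilinear $f$, exactly which of the finitely many combinations of (diagonal scalar, $e_{11}-e_{22}$-coefficient, $e_{12}$-coefficient) are simultaneously attainable as the evaluation ranges over $\mathcal{S}^l\times\mathcal{K}^{m-l}$, and verifying that the only outcomes are the six subspaces listed. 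This should be manageable because all three coordinate functions are, by Lemma~\ref{lemma12entry} and~\eqref{evaluation1}, explicit polynomials in the $w$'s that are themselves multilinear in the blocks, so standard surjectivity arguments for multilinear scalar polynomials apply.
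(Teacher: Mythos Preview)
Your overall approach---reduce $f$ modulo $Id(UT_2,r)$ via Proposition~\ref{pgeneratorsreflection}, then evaluate on the generic matrices~\eqref{evaluation1} and read off the image---is the same as the paper's. But your execution contains an unnecessary confusion that makes the argument longer and shakier than it needs to be.

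The issue is your ``mixed cases'' analysis. For $f\in P_{m,l}$, \emph{every} monomial involves exactly the variables $y_1,\dots,y_l,z_{l+1},\dots,z_m$; in particular, the number of skew variables is always $\eta=m-l$, so the parity is fixed once and for all. There is no scenario in which some monomials have even skew-count and others odd. You eventually recover from this by observing that ``the diagonal of every monomial is a scalar multiple of a fixed sign pattern determined by the skew-count,'' but the detour through $UT_2$ as a candidate image and then arguing it away is avoidable.

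The paper's proof is cleaner because it pushes the reduction further: using that $[y_i,y_j]$, $[z_i,z_j]$, and $z_iy_kz_j-z_jy_kz_i$ all lie in $Id(UT_2,r)$, one rearranges each monomial so that the \emph{single} polynomial
\[
f\equiv \alpha\, y_1\cdots y_l z_{l+1}\cdots z_m \;+\; \sum_{i=1}^l \alpha_i\, y_1\cdots\widehat{y_i}\cdots y_l z_{l+1}\cdots z_{m-1}[z_m,y_i]
\]
represents $f$ modulo the ideal, with one coefficient $\alpha$ rather than a sum over permutations $\sigma,\tau$. This collapses your case analysis: if $\alpha=0$ the image sits in $J$; if $\alpha\neq 0$ one specializes all but one pair $(w_1^{(k)},w_2^{(k)})$ and sees directly that the image is $\mathcal{S}$ or $F$ (for $\eta$ even) or $\mathcal{K}+J$ or $\mathcal{K}$ (for $\eta$ odd), according to whether some $(-1)^\eta\alpha+2\alpha_i$ is nonzero. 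Your appeal to Corollary~\ref{corollarydimension2} is a legitimate shortcut to conclude the image is \emph{a} vector subspace, but it does not by itself tell you \emph{which} one-dimensional subspace arises; you still need the explicit evaluation to rule out lines other than $F$, $J$, $\mathcal{K}$---and that is exactly the computation the paper carries out.
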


\begin{proof}
Let $f=f(y_{1},\dots,y_{l},z_{l+1},\dots,z_{m}) \in P_{m,l}$. By Proposition \ref{pgeneratorsreflection}, we may write $f$ modulo $Id(UT_{2},r)$ as
\begin{eqnarray}\nonumber
f=\alpha y_{1}\cdots y_{l}z_{l+1}\cdots z_{m} +\sum_{i=1}^{l}\alpha_{i}y_{1}\cdots \widehat{y_{i}}\cdots y_{l}z_{l+1}\cdots z_{m-1}[z_{m},y_{i}].
\end{eqnarray}

Notice that if $\alpha=0$, then $f(UT_{2})=J$. Indeed, this follows from the fact that the image of  $[z_{m},y_{i}]$ is contained in $J$ along with the later being an ideal of $UT_{2}$.

We may assume from now on that $\alpha\neq0$ and let us denote $\eta=m-l$.  Note that under the evaluations (\ref{evaluation1}) and by Lemma \ref{lemma12entry} we have that the entry $(1,2)$ of $y_{1}\cdots y_{l}z_{l+1}\cdots z_{m}$ is given by
\[
(y_{1}\cdots y_{l}z_{l+1}\cdots z_{m})_{12}=(-1)^{\eta}w_{1}^{(l+1)}\cdots w_{1}^{(m)}\sum_{i=1}^{l}w_{1}^{(1)}\cdots \widehat{w_{1}^{(i)}}\cdots w_{1}^{(l)}w_{2}^{(i)}.
\]

Moreover, for each $i\in\{1,\dots,l\}$, we have 
$[z_{m},y_{i}]=2w_{1}^{(m)}w_{2}^{(i)}e_{12}$ and also 

\[
(y_{1}\cdots \widehat{y_{i}}\cdots y_{l}z_{l+1}\cdots z_{m-1})_{11}=w_{1}^{(1)}\cdots \widehat{w_{1}^{(i)}}\cdots w_{1}^{(l)}w_{1}^{(l+1)}\cdots w_{1}^{(m-1)}.
\]
Thus
\[
y_{1}\cdots \widehat{y_{1}}\cdots y_{l}z_{l+1}\cdots z_{m-1}[z_{m},y_{i}]=2w_{1}^{(1)}\cdots \widehat{w_{1}^{(i)}}\cdots w_{1}^{(m)}w_{2}^{(i)}e_{12}.
\]
Therefore we conclude that $f(y_{1},\dots,y_{l},z_{l+1},\dots,z_{m})$ is given by the following sum:
\[
\alpha w_{1}^{(1)}\cdots w_{1}^{(m)}(e_{11}+(-1)^{\eta}e_{22}) + \sum_{i=1}^{l}((-1)^{\eta}\alpha+2\alpha_{i})w_{1}^{(1)}\cdots \widehat{w_{1}^{(i)}}\cdots w_{1}^{(m)}w_{2}^{(i)}e_{12}.
\]

If $(-1)^{\eta}\alpha+2\alpha_{i}=0$ for all $i\in\{1,\dots,l\}$, then one can see that $f(UT_2)=F$ or $ f(UT_2)=\mathcal{K}$, according to $\eta$ being even or odd, respectively. 

We assume now that $(-1)^{\eta}\alpha+2\alpha_{k}\neq0$ for some $k\in\{1,\dots,l\}$. We thus have that $f(UT_2)\subset \mathcal{S}$, in case $\eta$ is even, or $f(UT_2)\subset \mathcal{K}+J$ otherwise. We further perform the following evaluation on the commutative variables $w_{j}^{(i)}$:
\begin{itemize}
\item $w_{1}^{(i)}=1$ for $i=l+1,\dots,m$;
\item $w_{2}^{(i)}=0$ for all $i\neq k$;
\item $w_{1}^{(i)}=1$ for all $i\in\{1,\dots,l\}\setminus\{k\}$.
\end{itemize}
Therefore, 
\[
f(y_{1},\dots,y_{l},z_{l+1},\dots,z_{m})=\alpha w_{1}^{(k)}(e_{11}+(-1)^{\eta}e_{22})+((-1)^{\eta}\alpha+2\alpha_{k})w_{2}^{(k)}e_{12}
\]
which implies $f(UT_2)=\mathcal{S}$ or $f(UT_2)=\mathcal{K}+J$.
\end{proof}

\subsubsection{Symplectic involution}

We now turn our attention to the sympletic involution on $UT_{2}$.

It is easy to check that the spaces of symmetric and skew-symmetric elements are given respectively as 
\[\mathcal{S}=\spa\{e_{11}+e_{22}\} \ \mbox{and} \ \mathcal{K}=\spa\{e_{11}-e_{22},e_{12}\}. 
\]

Once again we recall the corresponding result from \cite{VincenzoKoshlukovScala} (see also \cite{UrureGoncalves}).

\begin{proposition}\label{pgeneratorssympletic}
Let $F$ be a field of characteristic different from $2$. Then the vector space $F\langle Y\cup Z \rangle$ is generated, modulo $Id(UT_{2},s)$, by the polynomials
\[
y^{p_{1}}\cdots y_{n}^{p_{n}}[z_{j},z_{i}]z_{i}^{q_{i}}z_{i+1}^{q_{i+1}}\cdots z_{m}^{q_{m}} \ \mbox{and} \ y_{1}^{p_{1}}\cdots y_{n}^{p_{n}}z_{1}^{q_{1}}\cdots z_{m}^{q_{m}}
\]
where $n\geq1, m\geq1,$ $p_1,\dots,p_n,q_1,\dots,q_m\geq 0$ and $j>i$.
\end{proposition}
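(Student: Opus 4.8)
The plan is to exhibit a small set of rewriting rules, each valid modulo $Id(UT_{2},s)$, and then to show that repeated application of these rules reduces an arbitrary monomial in the $y$'s and $z$'s to a linear combination of the two displayed families. Throughout I use the evaluations forced by the symplectic involution: a symmetric variable $y_i$ is a scalar matrix $\lambda_i(e_{11}+e_{22})$, hence central in $UT_2$, while a skew variable $z_j$ has the form $a_j(e_{11}-e_{22})+b_je_{12}$. Writing $D=e_{11}-e_{22}$ and $N=e_{12}$, the relations $D^2=e_{11}+e_{22}$, $N^2=0$, $DN=N$ and $ND=-N$ give at once that $[z_j,z_i]=2(a_jb_i-a_ib_j)N\in\spa\{e_{12}\}$ for all $i,j$. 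This single observation drives everything.

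First I would record the rules. (i) Since each $y_i$ evaluates to a central matrix, $[y_i,w]\equiv 0$ modulo $Id(UT_2,s)$ for every word $w$, and the $y$'s commute among themselves; so all symmetric variables can be collected at the front and ordered as $y_1^{p_1}\cdots y_n^{p_n}$. It therefore suffices to straighten words in the $z$'s alone and then prepend this $y$-monomial. (ii) The commutator of two skew variables lies in $\spa\{e_{12}\}$, so any product of two such commutators vanishes, $[z_b,z_a][z_d,z_c]\equiv 0$. (iii) A commutator anticommutes with every skew variable modulo the identities, $z_c[z_b,z_a]\equiv -[z_b,z_a]z_c$, as one checks from $Nz_c=-a_cN$ and $z_cN=a_cN$. (iv) Finally the Jacobi-type relation $[z_b,z_a]z_c\equiv[z_b,z_c]z_a-[z_a,z_c]z_b$, valid for arbitrary indices, which I would verify by comparing the $(1,2)$-entries of both sides (the diagonal entries being $0$ on each side).

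With these in hand the straightening proceeds in stages. Sort a $z$-word by adjacent transpositions using $z_bz_a=z_az_b+[z_b,z_a]$ for $b>a$: the fully sorted term is already a monomial $z_1^{q_1}\cdots z_m^{q_m}$ (the commutator-free family), while each correction term has a single commutator sitting inside a word, say $(\text{prefix})[z_b,z_a](\text{suffix})$. Rule (iii) lets me push the prefix to the right of the commutator at the cost of a sign, producing $\pm[z_b,z_a]\cdot(\text{word in the remaining }z\text{'s})$; rule (ii) then lets me sort this trailing word freely, since every transposition inside it creates a product of two commutators, which is zero. Thus each correction term becomes $\pm[z_b,z_a]z_1^{r_1}\cdots z_m^{r_m}$ with $b>a$.

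It remains to force the trailing product to use only indices $\geq\min(a,b)$, which is exactly where rule (iv) enters and is the step I expect to be the crux. If the trailing word contains a variable $z_c$ with $c<a$, I apply (iv) to the smallest such index; this replaces the term by two terms whose commutators are $[z_b,z_c]$ and $[z_a,z_c]$, both with smaller index $c$, and whose trailing words (re-sorted freely by (ii)) now involve only indices $\geq c$. Choosing $c$ minimal makes one pass suffice, and the resulting terms are precisely of the form $[z_j,z_i]z_i^{q_i}\cdots z_m^{q_m}$ with $j>i$. Prepending the front $y$-monomial to both the sorted term and these commutator terms yields the two claimed families, so they span $F\langle Y\cup Z\rangle$ modulo $Id(UT_2,s)$. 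The one place demanding care is the bookkeeping around rule (iv): one must confirm it is a genuine $*$-identity and check that lowering the commutator index while re-straightening the tail both terminates and lands in the stated index range.
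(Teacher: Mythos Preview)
The paper does not prove this proposition itself; it is quoted verbatim from \cite{VincenzoKoshlukovScala} (with a secondary reference to \cite{UrureGoncalves}), so there is no in-paper argument to compare against.

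That said, your proposal is correct and complete. The four rewriting rules are all genuine $*$-identities of $(UT_2,s)$: rule (i) is immediate from $\mathcal{S}=F\cdot 1$, rules (ii) and (iii) follow from $[z_b,z_a]\in Fe_{12}$ together with $e_{12}^2=0$ and $z_ce_{12}=-e_{12}z_c$, and rule (iv) is a consequence of (iii) and the Jacobi identity (expanding $[[z_b,z_a],z_c]+[[z_a,z_c],z_b]+[[z_c,z_b],z_a]=0$ and using (iii) on each summand gives $2[z_b,z_a]z_c+2[z_a,z_c]z_b+2[z_c,z_b]z_a=0$), so no separate entry-by-entry check is strictly needed. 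The step you flagged as delicate is in fact clean: once the tail is sorted, the minimal index $c$ occurs as its first factor, so (iv) applies directly; the two resulting commutators $[z_b,z_c]$ and $[z_a,z_c]$ both have smaller index $c$, and since $c$ was already the global minimum of the tail, inserting $z_a$ (respectively $z_b$) back into the tail and re-sorting via (ii) yields trailing words with all indices $\geq c$. Thus one application of (iv) suffices, termination is immediate, and the output lies in the stated family. This is the standard straightening argument one expects in the cited references.
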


We consider evaluations of the variables $z_{i}$'s by matrices with entries in $F[W]$:  
\begin{eqnarray}\label{evaluation2}
z_{i}=\begin{pmatrix}
     w_{1}^{(i)}&w_{2}^{(i)}  \\
     & -w_{1}^{(i)}
\end{pmatrix}
\end{eqnarray}
for $i\in\{1,\dots,m\}$. 

\begin{lemma}
In light of \eqref{evaluation2}, the matrix $z_{1}\cdots z_{m}$ is given by 
\[
w_{1}^{(1)}\cdots w_{1}^{(m)}(e_{11}+(-1)^{m}e_{22})+\sum_{i=1}^{m}(-1)^{i+m}w_{1}^{(1)}\cdots \widehat{w_{1}^{(i)}}\cdots w_{1}^{(m)}w_{2}^{(i)}e_{12}
\]
\end{lemma}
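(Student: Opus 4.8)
The plan is to reduce the statement to the general multiplication rule for $2\times 2$ upper triangular matrices and then substitute. First I would record, in the same spirit as Lemma \ref{lemma12entry}, that for arbitrary matrices $A_j = a_j e_{11} + b_j e_{12} + c_j e_{22}$ ($j=1,\dots,m$) with entries in a commutative ring one has
\[
A_1 \cdots A_m = \Big(\prod_{j=1}^{m} a_j\Big) e_{11} + \Big(\sum_{i=1}^{m} a_1 \cdots a_{i-1}\, b_i\, c_{i+1} \cdots c_m\Big) e_{12} + \Big(\prod_{j=1}^{m} c_j\Big) e_{22}.
\]
This is an immediate induction on $m$: for $m=1$ it is a tautology, and assuming it for $m-1$, multiplying the right-hand side on the right by $A_m$ and using the relations $e_{11}e_{11}=e_{11}$, $e_{11}e_{12}=e_{12}$, $e_{12}e_{22}=e_{12}$, $e_{22}e_{22}=e_{22}$ (all remaining products among these units being zero) yields the formula for $m$; the only verification is that the $(1,2)$-coefficient becomes $(a_1\cdots a_{m-1})\,b_m + \sum_{i=1}^{m-1} a_1\cdots a_{i-1} b_i c_{i+1}\cdots c_{m-1}\,c_m = \sum_{i=1}^{m} a_1\cdots a_{i-1} b_i c_{i+1}\cdots c_m$.

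Then I would specialise to the evaluation \eqref{evaluation2}, that is $a_j = w_1^{(j)}$, $b_j = w_2^{(j)}$ and $c_j = -w_1^{(j)}$. The diagonal part becomes $\big(\prod_j w_1^{(j)}\big)e_{11} + \big(\prod_j (-w_1^{(j)})\big)e_{22} = w_1^{(1)}\cdots w_1^{(m)}\big(e_{11} + (-1)^m e_{22}\big)$. In the $(1,2)$-entry, the $i$-th summand is $w_1^{(1)}\cdots w_1^{(i-1)}\, w_2^{(i)}\,(-w_1^{(i+1)})\cdots(-w_1^{(m)}) = (-1)^{m-i}\, w_1^{(1)}\cdots \widehat{w_1^{(i)}}\cdots w_1^{(m)}\, w_2^{(i)}$, and since $(-1)^{m-i} = (-1)^{m+i}$ this is precisely the $i$-th term of the asserted sum. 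Summing over $i$ gives the claimed expression.

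Alternatively one could run the induction directly on $z_1\cdots z_m$ using the inductive form $z_1\cdots z_{m-1} = P(e_{11} + (-1)^{m-1}e_{22}) + Qe_{12}$ and multiply by $z_m$; this is the same computation. Either way there is no real obstacle here --- the statement is a routine calculation --- and the only point demanding a little attention is the sign bookkeeping, namely keeping track of the $m$ factors of $-1$ contributed by the lower-right entries $-w_1^{(j)}$ of the matrices $z_j$.
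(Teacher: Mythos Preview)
Your proof is correct and follows essentially the same approach as the paper, which records only ``Induction on $m$.'' You have simply supplied the details, first establishing the general product formula for arbitrary upper triangular $2\times 2$ matrices and then specialising; this is a mild generalisation of the intermediate step but not a different method.
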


\begin{proof}
Induction on $m$. 
\end{proof}

The proof of the next proposition follows the same ideas from the reflexive case. 

\begin{proposition}\label{ut2s}
Let $f\in P_{m,l}$. Then the image of $f$ on $(UT_{2},s)$ is $\{0\},J,\mathcal{S},\mathcal{K},\mathcal{K}\cap \mathcal{D}, \mathcal{S}+J$, where $\mathcal{D}$ denotes the space of diagonal matrices.
\end{proposition}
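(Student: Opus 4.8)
The plan is to mirror the structure of the reflexive case (Proposition \ref{ut2r}) using the analogous generator set from Proposition \ref{pgeneratorssympletic}. Given $f\in P_{m,l}$, write $f$ modulo $Id(UT_2,s)$ as a linear combination of the two families of generators specialized so that each variable occurs exactly once. Since the skew-symmetric variables carry the only nontrivial off-diagonal contribution here (note $\mathcal{S}=\spa\{e_{11}+e_{22}\}$ is central), one finds that $f$ is congruent to
\[
\alpha\, y_1\cdots y_l z_{l+1}\cdots z_m + \sum_{l+1\le i<j\le m}\beta_{ij}\, y_1\cdots y_l\,[z_j,z_i]\,z_{l+1}\cdots \widehat{z_i}\cdots \widehat{z_j}\cdots z_m,
\]
up to reordering, where the $y$'s only scale things by the product of their diagonal entries. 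First I would dispose of the degenerate case: if the coefficient $\alpha$ of the "pure monomial" is zero, then every surviving summand contains a bracket $[z_j,z_i]$, whose image lies in $J$; since $J$ is an ideal, $f(UT_2)\subseteq J$, hence $f(UT_2)=\{0\}$ or $f(UT_2)=J$ by Corollary \ref{corollarydimension2}.

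Next, assuming $\alpha\neq 0$, I would substitute the generic evaluations: the $y_i$ as scalar matrices $w_1^{(i)}(e_{11}+e_{22})$ (these are the only symmetric elements) and the $z_i$ by \eqref{evaluation2}. Using the displayed lemma for $z_{l+1}\cdots z_m$ and the identity $[z_j,z_i]=2(w_1^{(i)}w_2^{(j)}-w_1^{(j)}w_2^{(i)})e_{12}$, one computes the $(1,1)$- and $(1,2)$-entries of $\tilde f$ as explicit polynomials in $W$: the diagonal part is $\alpha\,w_1^{(1)}\cdots w_1^{(m)}(e_{11}+(-1)^{\eta}e_{22})$ with $\eta=m-l$, and the $(1,2)$-entry is a combination of monomials of the form $w_1^{(\cdots)}w_2^{(i)}$ with coefficients linear in $\alpha$ and the $\beta_{ij}$. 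The key dichotomy is whether $\eta$ is even or odd: when $\eta$ is even the diagonal part is a multiple of $e_{11}+e_{22}=\mathcal{S}$, so $f(UT_2)\subseteq \mathcal{S}+J$; when $\eta$ is odd it is a multiple of $e_{11}-e_{22}\in\mathcal{K}$, so $f(UT_2)\subseteq \mathcal{K}$ (all of $\mathcal{K}$ being $2$-dimensional).

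Then I would split according to whether the coefficients multiplying the $w_2^{(i)}$-terms all vanish. If they do, the image is purely diagonal: either $\mathcal{S}$ (when $\eta$ even) or $\mathcal{K}\cap\mathcal{D}=\spa\{e_{11}-e_{22}\}$ (when $\eta$ odd, since then $f(UT_2)$ lies inside the diagonal of $\mathcal{K}$, giving $\{0\}$ or that line). If some off-diagonal coefficient is nonzero, I would choose a specialization of the $w_j^{(i)}$ (setting most $w_2^{(i)}=0$, most $w_1^{(i)}=1$, keeping one symmetric scalar and one off-diagonal parameter free) to realize independently a generic scalar multiple of the diagonal part and a generic multiple of $e_{12}$; this forces $f(UT_2)$ to contain $\mathcal{S}+J$ (when $\eta$ even) or $\mathcal{K}$ (when $\eta$ odd), and since it is also contained in these, equality follows. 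Collecting cases and reinstating the degenerate ones yields the list $\{0\},J,\mathcal{S},\mathcal{K},\mathcal{K}\cap\mathcal{D},\mathcal{S}+J$.

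The main obstacle I anticipate is not conceptual but bookkeeping: confirming that after applying Proposition \ref{pgeneratorssympletic} and restricting to the multilinear setting $P_{m,l}$, no further monomials survive beyond the two types above (in particular that products of two distinct brackets or brackets not in the normal-form position are either zero modulo $Id(UT_2,s)$ or reduce to these), and then verifying that the free parameters in the substitution can be tuned to separate the diagonal and $(1,2)$ contributions simultaneously. One should also be slightly careful that the coefficient of the $(1,2)$-term genuinely depends on the $\beta_{ij}$ (and possibly on $\alpha$, via commuting a $z$ past the bracket) so that the "all off-diagonal coefficients zero" condition is the correct cut; once that algebra is pinned down, the rest is parallel to Proposition \ref{ut2r}.
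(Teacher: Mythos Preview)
Your proposal is correct and follows essentially the same approach as the paper. The paper streamlines slightly by first absorbing the $y$-variables entirely (since $\mathcal{S}=F$ is central, each $y_i$ contributes only a scalar factor, so one may take $l=0$) and by observing that the multilinear specialization of Proposition~\ref{pgeneratorssympletic} forces the smaller index in every bracket to be the least $z$-index, so the commutator sum collapses to a single sum $\sum_i\alpha_i[z_i,z_1]z_2\cdots\widehat{z_i}\cdots z_m$ rather than your double sum over $i<j$; this dissolves exactly the bookkeeping obstacle you anticipate, but the dichotomy on the parity of $\eta$ and the specialization argument are identical to yours.
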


\begin{proof}

By Proposition \ref{pgeneratorssympletic} and since $\mathcal{S}=F$, we may consider  
\[
f(z_{1},\dots,z_{m})=\alpha_{0}z_{1}\cdots z_{m}+\sum_{i=1}^{m}\alpha_{i}[z_{i},z_{1}]z_{2}\cdots \widehat{z_{i}}\cdots z_{m}.
\]
Note that 
\[
[z_{i},z_{1}]=2(w_{1}^{(i)}w_{2}^{(1)}-w_{2}^{(i)}w_{1}^{(1)})e_{12}
\]
and 
\[
(z_{2}\cdots \widehat{z_{i}}\cdots z_{m})_{22}=(-1)^{m}w_{1}^{(2)}\cdots \widehat{w_{1}^{(i)}}\cdots w_{1}^{(m)}.
\]
Hence,
\[
[z_{i},z_{1}]z_{2}\cdots \widehat{z_{i}}\cdots z_{m}=(-1)^{m}2(w_{1}^{(2)}\cdots w_{1}^{(m)}w_{2}^{(1)}-w_{1}^{(1)}\cdots \widehat{w_{1}^{(i)}}\cdots w_{1}^{(m)}w_{2}^{(i)})e_{12}
\]
and therefore the evaluation of $f$ on the matrices $z_1,\dots,z_m$ is given by the sum of the following two matrices
\[
\alpha_{0}w_{1}^{(1)}\cdots w_{1}^{(m)}(e_{11}+(-1)^{m}e_{22})
\]
and
\begin{eqnarray}\nonumber 
\bigg( (-1)^{m}(-\alpha_{0}+\sum_{i=1}^{m}2\alpha_{i})w_{1}^{(2)}\cdots w_{1}^{(m)}w_{2}^{(1)}\\\nonumber 
+\sum_{i=2}^{m}(-1)^{m}((-1)^{i}\alpha_{0}+2\alpha_{i})w_{1}^{(1)}\cdots \widehat{w_{1}^{(i)}}\cdots w_{1}^{(m)}w_{2}^{(i)}\bigg)e_{12}
\end{eqnarray}

We may assume $\alpha_0\neq0$, otherwise the image would be already determined since $f(UT_2)\subset J$.

If $-\alpha_{0}+\sum_{i=1}^{m}2\alpha_{i}=0$ and $(-1)^{i}\alpha_{0}+2\alpha_{i}=0$ for all $i\in\{2,\dots,m\}$, then it is easy to see that $f(UT_2)=\spa\{e_{11}+(-1)^{n}e_{22}\}\in\{\mathcal{S},\mathcal{K}\cap \mathcal{D}\}$.
Otherwise let us first assume that $-\alpha_{0}+\sum_{i=1}^{m}2\alpha_{i}\neq0$. Then we perform the following evaluation of the commutatite variables $w_j^{(i)}$: 
\begin{itemize}
\item $w_{2}^{(i)}=0$ for all $i\in\{2,\dots,m\}$;
\item $w_{1}^{(i)}=1$ for all $i\in\{2,\dots,m\}$.
\end{itemize}
We therefore have  
\[
f(z_1,\dots,z_m)=\alpha_{0}w_{1}^{(1)}(e_{11}+(-1)^{m}e_{22})+
(-1)^{m}(-\alpha_{0}+\sum_{i=1}^{m}2\alpha_{i})w_{2}^{(1)}e_{12}
\]
which implies that $f(UT_2)=\spa\{e_{11}+(-1)^{m}e_{22},e_{12}\}\in\{\mathcal{K},\mathcal{S}+J\}$.

Assume now that $(-1)^{i}\alpha_0+2\alpha_i\neq0$ for some $i\in\{2,\dots,m\}$. We thus set the evaluation
\begin{itemize}
\item $w_{2}^{(j)}=0$ for all $j\in\{1,\dots,m\}\setminus\{i\}$;
\item $w_{1}^{(j)}=1$ for all $j\in\{1,\dots,m\}\setminus\{i\}$.
\end{itemize}
We obtain then 
\[
f(z_1,\dots,z_m)=\alpha_0 w_1^{(i)}(e_{11}+(-1)^{m}e_{22})+(-1)^{m}((-1)^{i}+2\alpha_i)w_{2}^{(i)}e_{12},
\]
which clearly leads us to $f(UT_2)=\spa\{e_{11}+(-1)^{m}e_{22},e_{12}\}\in\{\mathcal{K},\mathcal{S}+J\}$.
\end{proof}

\subsection{Grading \boldmath{$\Gamma_{2,2}$}}

Let us first write 
\[
    UT_{2}=\mathcal{A}_{1}\oplus\mathcal{A}_{g},
\]
where $\mathcal{A}_{1}=\spa\{e_{11},e_{22}\}$ and $\mathcal{A}_{g}=\spa\{e_{12}\}$. We further write
\begin{eqnarray}\label{decompositionUT2}
\mathcal{A}_{1}=\mathcal{S}_{1}\oplus\mathcal{K}_{1} \ \mbox{and} \ \mathcal{A}_{g}=\mathcal{S}_{g}\oplus\mathcal{K}_{g}
\end{eqnarray}
with respect to the involution $*\in\{r,s\}$. 

Applying Corollary \ref{corollarydimension2}  yields that the image of multilinear graded polynomial with involution $f$ on $UT_{2}$ is a vector space (the subspaces $V_{i}$ are defined as the ones appearing in the decompositions in \eqref{decompositionUT2}, in accordance with the homogeneous degree and symmetry of the variables occurring in $f$). Indeed, since $\supp(\Gamma_{2,2})$ is abelian, then $f(UT_{2})$ is contained in some homogeneous component, and now one just has to notice that both $\mathcal{A}_{1}$ and $\mathcal{A}_{g}$ have dimension $\leq 2$. 

It is also straightforward to obtain a precise classification of the images of multilinear graded $*$-polynomials on $UT_{2}$ with the grading $\Gamma_{2,2}$.

\begin{proposition}\label{ut2gradedinvolution}
Let $f\in P_{m,l}^G$ and consider the $G$-graded involution $*\in\{r,s\}$ on $UT_2$ with respect to the grading $\Gamma_{2,2}$. Then $f(UT_2)$ is either $\{0\}$ or some (skew-)symmetric part from some homogeneous component.
\end{proposition}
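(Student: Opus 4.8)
The plan is to reduce to the two homogeneous components of $\Gamma_{2,2}$ and then enumerate the possibilities in each, exactly as was done for $\Gamma_{2,1}$ but with the extra bookkeeping coming from the grading. First I would invoke the observation already made in the surrounding text: since $\supp(\Gamma_{2,2})=\{1,g\}$ is abelian, the image of a multilinear graded $*$-polynomial $f\in P_{m,l}^G$ is concentrated in a single homogeneous component $\mathcal{A}_\varepsilon$, where $\varepsilon\in\{1,g\}$ is the product (in $G$) of the homogeneous degrees of the variables occurring in $f$. Next I would record the symmetric/skew-symmetric decompositions of the two components explicitly for $*\in\{r,s\}$: for the reflexive involution one has $\mathcal{A}_1=\mathcal{S}_1\oplus\mathcal{K}_1$ with $\mathcal{S}_1=\spa\{e_{11}+e_{22}\}$, $\mathcal{K}_1=\spa\{e_{11}-e_{22}\}$, and $\mathcal{A}_g=\mathcal{S}_g=\spa\{e_{12}\}$, $\mathcal{K}_g=0$; for the symplectic involution $\mathcal{A}_1=\mathcal{S}_1\oplus\mathcal{K}_1$ with the same $\mathcal{S}_1$ and $\mathcal{K}_1$, while $\mathcal{A}_g=\mathcal{K}_g=\spa\{e_{12}\}$, $\mathcal{S}_g=0$. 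In every case $\dim\mathcal{A}_\varepsilon\le 2$.

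The core of the argument is then just Corollary \ref{corollarydimension2}: $f(UT_2)\subset\mathcal{A}_\varepsilon$ and $\dim\mathcal{A}_\varepsilon\le 2$, so $f(UT_2)$ is automatically a vector subspace of $\mathcal{A}_\varepsilon$. To get the sharper conclusion — that it is $\{0\}$ or a (skew-)symmetric part of a homogeneous component rather than an arbitrary subspace of $\mathcal{A}_1$ — I would split according to $\varepsilon$. If $\varepsilon=g$, then $\mathcal{A}_g$ is one-dimensional, so $f(UT_2)$ is either $\{0\}$ or all of $\mathcal{A}_g$, and $\mathcal{A}_g$ equals $\mathcal{S}_g$ in the reflexive case and $\mathcal{K}_g$ in the symplectic case; either way it is a (skew-)symmetric part of a homogeneous component. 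If $\varepsilon=1$, the subtle point is that a two-dimensional image inside $\mathcal{A}_1$ could a priori be a ``diagonal'' line like $\spa\{e_{11}\}$ that is neither $\mathcal{S}_1$ nor $\mathcal{K}_1$; I would rule this out by the same symmetry/parity mechanism as in Propositions \ref{ut2r} and \ref{ut2s}. Concretely, write $f$ modulo the relevant $*$-identities using Proposition \ref{pgeneratorsreflection} or \ref{pgeneratorssympletic} restricted to the homogeneous variables of degree $1$ (variables of degree $g$ contribute only to the $e_{12}$-slot and, when $\varepsilon=1$, there must be an even number of them, so they evaluate into $\mathcal{A}_1$ via products of diagonal blocks), and then evaluate as in \eqref{evaluation1} or \eqref{evaluation2}; the diagonal part of $f$ comes out proportional to $e_{11}+(-1)^\eta e_{22}$ for a parity $\eta$ determined by $f$, which is exactly a generator of $\mathcal{S}_1$ or $\mathcal{K}_1$. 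Hence a nonzero image in $\mathcal{A}_1$ is forced to be $\mathcal{S}_1$, $\mathcal{K}_1$, or $\mathcal{A}_1=\mathcal{S}_1\oplus\mathcal{K}_1$, and in the last case one can realize both $e_{11}+(-1)^\eta e_{22}$ and the off-diagonal direction independently by the evaluations used in the proofs of Propositions \ref{ut2r}–\ref{ut2s}, so the image is the full component.

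I expect the only real obstacle to be the careful statement of what ``some (skew-)symmetric part from some homogeneous component'' is allowed to mean when $\varepsilon=1$: whether $\mathcal{A}_1$ itself (which is $\mathcal{S}_1\oplus\mathcal{K}_1$, not a single (skew-)symmetric part) should be included among the admissible images. If the intended reading permits the full homogeneous component, the enumeration above is complete; if not, one must additionally check that no multilinear $f$ can have image exactly $\mathcal{A}_1$, which — given the explicit evaluations in Propositions \ref{ut2r} and \ref{ut2s}, where images such as $\mathcal{S}$ (the full diagonal-type component) do occur — would be false, so the statement should be read with the full component allowed. Modulo that point of interpretation, everything reduces to Corollary \ref{corollarydimension2} together with the parity computation already carried out in the $\Gamma_{2,1}$ case, and no new ideas are needed.
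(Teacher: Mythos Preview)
Your overall strategy---reduce to a single homogeneous component and then invoke Corollary \ref{corollarydimension2}---is sound, but you take a longer road than necessary and introduce a confusion along the way.

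The paper splits not by the product degree $\varepsilon$ but simply by whether $f$ contains \emph{any} variable of degree $g$. If it does, the observation that $\mathcal{A}_g=Fe_{12}$ is a one-dimensional nilpotent ideal of index $2$ settles things in one line: the image is $\{0\}$ or $\mathcal{A}_g$ (which equals $\mathcal{S}_g$ for $r$ and $\mathcal{K}_g$ for $s$). If it does not, then all variables are neutral, and here the key simplification you overlooked is that $\mathcal{A}_1=\spa\{e_{11},e_{22}\}$ is \emph{commutative}: the graded $*$-identities $[y_{1,1},x]=0$ and $[z_{1,1},z_{2,1}]=0$ hold, so modulo them $f$ collapses to a scalar multiple of the single monomial $y_{1,1}\cdots y_{l,1}z_{l+1,1}\cdots z_{m,1}$, whose image is visibly $\{0\}$, $\mathcal{S}_1$, or $\mathcal{K}_1$. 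No appeal to Propositions \ref{pgeneratorsreflection}, \ref{pgeneratorssympletic} or the evaluations \eqref{evaluation1}--\eqref{evaluation2} is needed.

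Your route would also reach the goal, but the final paragraph is confused: you speak of realizing ``the off-diagonal direction'' inside $\mathcal{A}_1$, yet in $\Gamma_{2,2}$ the neutral component is purely diagonal---there is no off-diagonal direction. Consequently the full $\mathcal{A}_1$ is \emph{never} the image of a polynomial in $P_{m,l}^G$ (your own parity observation already forces the image into the line $\spa\{e_{11}+(-1)^\eta e_{22}\}$), so the interpretive worry you raise is moot. Once you notice that $\mathcal{A}_1$ is a commutative $2$-dimensional algebra, the whole neutral case becomes a one-line parity check rather than a replay of the $\Gamma_{2,1}$ analysis.
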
 

\begin{proof}
Let us consider $*=r$, since the sympletic case is quite similar. First we note that 
\[
\mathcal{S}_{1}=\spa\{e_{11}+e_{22}\}, \mathcal{K}_{1}= \spa\{e_{11}-e_{22}\}, \mathcal{S}_{g}=\mathcal{A}_{g} \ \mbox{and} \ \mathcal{K}_{g}=\{0\}.
\]

Since $\supp(\Gamma_{2,2})=\{1,g\}$, we may consider that only variables of homogeneous degree $1$ and $g$ occur in $f$. Now note that $\mathcal{S}_g$ is a nilpotent ideal of $UT_2$ of index $2$, and it is one-dimensional as a vector space. Hence if $f$ has at least one variable of homogeneous degree $g$, then $f(UT_2)$ is either $\{0\}$ or $\mathcal{S}_g$. 

So we may assume now that $f$ has only neutral variables.  Since $UT_2$ satisfies the graded $*$-identities $[z_{1,1},z_{2,1}]=0$ and $[y_{1,1},x]=0$, where $x\in Y_G\cup Z_G$, then modulo these identities we may write $f$ as
\[
f=\alpha y_{1,1}\cdots y_{l,1}z_{l+1,1}\cdots z_{m,1},
\]
for some $\alpha\in F$. This will therefore imply that $f(UT_2)$ equals to $\{0\}, \mathcal{S}_1$ or $\mathcal{K}_1$.
\end{proof}

Finally we give the proof of the main theorem of this section.

\begin{proof}[Proof of Theorem \ref{ut2}]
It is enough to apply Theorem \ref{classificationgradedinvolution} and Proposition \ref{epimorphism}, along with Propositions \ref{ut2r},\ref{ut2s},\ref{ut2gradedinvolution}.
\end{proof}
\section{\bf $3\times 3$ matrices}

Let us recall that, up to equivalence, there is only one involution on $UT_{3}$, the reflexive one, given by:
\[
\left(\begin{array}{ccc}
    a_{11} & a_{12} & a_{13}\\
         & a_{22} & a_{23}\\
         &      & a_{33}
\end{array}\right)^{r}=\begin{pmatrix}
    a_{33} & a_{23} & a_{13}\\
         & a_{22} & a_{12}\\
         &      & a_{11}
\end{pmatrix}.
\]

Therefore, the only graded involutions on $UT_3$ are the ones given by the reflexive involution $r$ and gradings:
\begin{itemize}
\item $\Gamma_{1,3}$: the trivial grading;
\item $\Gamma_{2,3}$: $UT_3=\mathcal{A}_{1}\oplus \mathcal{A}_{g}$, where $\mathcal{A}_{1}=\spa\{e_{11},e_{22},e_{33},e_{13}\}$ and $\mathcal{A}_{g}=\spa\{e_{12},e_{23}\}$;
\item $\Gamma_{3,3}$: $UT_{3}=\mathcal{A}_{1}\oplus \mathcal{A}_{g} \oplus \mathcal{A}_{h}$, where $\mathcal{A}_{1}=\spa\{e_{11},e_{22},e_{33}\}$, $\mathcal{A}_{g}=\spa\{e_{12},e_{23}\}$ and $\mathcal{A}_{h}=\spa\{e_{13}\}$.
\end{itemize}

Now we present an example which shows that an analogue of Theorem \ref{ut2} cannot be expected for upper triangular matrices of order $n\geq 3$ with the trivial grading. The polynomial from our example will be in skew-symmetric variables. To this end, we recall that the skew-symmetric part of $UT_n$ with the reflexive involution is given by 
\[
\mathcal{K}=\spa\{e_{ij}-e_{n+1-j,n+1-i}|i\leq j, i,j=1,\dots,n\}.
\]

\begin{proposition}\label{ut3example}
Let $n\geq3$ and let $UT_{n}$ be endowed with the reflexive involution. Then the image of the multilinear polynomial $f(z_1,z_2)=z_1 z_2$ on $UT_{n}$ is not a vector space. 
\end{proposition}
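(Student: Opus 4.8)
The plan is to exhibit an explicit pair of skew-symmetric matrices whose product is not in $\mathcal{K}$, together with another pair whose product is in $\mathcal{K}$, and then produce an element of $f(UT_n)$ which fails to lie in the $F$-span of these (or, more directly, to find two elements of $f(UT_n)$ whose sum is provably not a product $z_1z_2$ of skew-symmetric matrices). Since $f$ is homogeneous of degree $2$ and $f(\lambda z_1, z_2) = \lambda f(z_1,z_2)$, the image $f(UT_n)$ is automatically closed under scalar multiplication; hence the only way it can fail to be a vector space is failure of closure under addition. So the core of the argument is: find $a_1,a_2,b_1,b_2 \in \mathcal{K}$ such that $a_1a_2 + b_1b_2$ cannot be written as $cd$ for any $c,d \in \mathcal{K}$.

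First I would record the shape of $\mathcal{K}$ for small $n$ explicitly from the formula $\mathcal{K}=\spa\{e_{ij}-e_{n+1-j,n+1-i}\mid i\le j\}$; for $n=3$ this is $\spa\{e_{11}-e_{33},\, e_{12}-e_{23},\, e_{13}\}$, which is $3$-dimensional. I would then compute the product of a general element $z_1 = \alpha(e_{11}-e_{33}) + \beta(e_{12}-e_{23}) + \gamma e_{13}$ with a general $z_2 = \alpha'(e_{11}-e_{33}) + \beta'(e_{12}-e_{23}) + \gamma' e_{13}$. The product lands in $UT_3$, and I would read off its five potentially nonzero entries $(1,1),(1,2),(1,3),(2,3),(3,3)$ as polynomials in $\alpha,\beta,\gamma,\alpha',\beta',\gamma'$. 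The key structural observations to extract are: the $(1,1)$ and $(3,3)$ entries are both equal to $\alpha\alpha'$ (so any product $z_1z_2$ has equal $(1,1)$ and $(3,3)$ entries, consistent with membership would require it, but the off-diagonal pattern is the real constraint), and more importantly the $(1,2)$ entry is $\alpha\beta'$ while the $(2,3)$ entry is $-\beta\alpha'$ — for an element of $\mathcal{K}$ one needs the $(1,2)$ and $(2,3)$ entries to be \emph{negatives} of each other. So a product $z_1z_2$ lies in $\mathcal{K}$ iff $\alpha\beta' = \beta\alpha'$ (after also checking the diagonal condition, which is automatic here) — a genuinely nontrivial quadratic constraint.

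Next I would use this to build the counterexample. Take $a_1 = e_{11}-e_{33}$, $a_2 = e_{12}-e_{23}$: then $a_1a_2 = e_{12}$, which has $(1,2)$ entry $1$ and $(2,3)$ entry $0$, so $a_1a_2 \notin \mathcal{K}$. Similarly take $b_1 = e_{12}-e_{23}$, $b_2 = e_{11}-e_{33}$: then $b_1b_2 = -e_{23}$ wait — I would instead aim for $a_1a_2 = e_{12}$ and $b_1b_2 = e_{23}$ (achievable by another sign choice, e.g. $b_1 = e_{12}-e_{23}$, $b_2 = -(e_{11}-e_{33})$ gives $e_{23}$ after checking signs). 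Both $e_{12}$ and $e_{23}$ lie in $f(UT_3)$. Now $e_{12}+e_{23}$ has $(1,2)$ entry $1$ and $(2,3)$ entry $1$; if this equalled $z_1z_2$ for skew-symmetric $z_1,z_2$ with parameters as above, we would need $\alpha\beta' = 1$ and $-\beta\alpha' = 1$, together with the $(1,1)=(3,3)$ entry being $\alpha\alpha' = 0$ (since $e_{12}+e_{23}$ has zero diagonal) and the $(1,3)$ entry matching $0$. From $\alpha\alpha' = 0$ we get $\alpha = 0$ or $\alpha' = 0$, either of which kills one of $\alpha\beta' = 1$ or $\beta\alpha' = -1$ — contradiction. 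Hence $e_{12}+e_{23}\notin f(UT_3)$ although $e_{12}, e_{23}\in f(UT_3)$, so $f(UT_3)$ is not closed under addition.

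Finally, for general $n\ge 3$ I would reduce to the $n=3$ case: the three matrix units $e_{11}-e_{nn}$, $e_{12}-e_{n-1,n}$ (more care with indices for $n>3$) — actually the clean reduction is via Proposition \ref{epimorphism}. There is a surjection of algebras with involution from (a suitable subalgebra, or rather) $UT_n$ onto $UT_3$ compatible with the reflexive involutions; more simply, I would note that the same computation carries over by placing the $3\times 3$ example in the top-left/bottom-right corners symmetrically: work with $z_i$ supported on entries $(1,1),(1,2),(1,n),(n-1,n),(n,n)$ and their reflexive partners, which for the product $z_1z_2$ reduces the relevant entries to exactly the $n=3$ computation, while all other entries of the products vanish. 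Then $e_{12}+e_{n-1,n}\in f(UT_n)$ would be forced, contradicting the same diagonal-vanishing argument. The main obstacle — really the only thing requiring care — is bookkeeping the entries of the product of two general skew-symmetric matrices and verifying that the diagonal-vanishing forces a contradiction; for $n>3$ one must also confirm that no ``extra'' entries (from the larger radical) can be exploited to realize $e_{12}+e_{n-1,n}$, which the support restriction handles but should be checked honestly.
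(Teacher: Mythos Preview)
Your argument for $n=3$ is essentially correct, modulo a small slip: $e_{13}$ is \emph{symmetric} under the reflexive involution on $UT_3$ (since $e_{13}^r=e_{13}$), so $\mathcal{K}$ is only $2$-dimensional, spanned by $e_{11}-e_{33}$ and $e_{12}-e_{23}$. This does not damage the proof---your contradiction from $\alpha\alpha'=0$ versus $\alpha\beta'=1$ and $\beta\alpha'=1$ goes through verbatim once the spurious parameters $\gamma,\gamma'$ are dropped. This yields a valid proof for $n=3$, with a different witness than the paper's (you use $e_{12}+e_{23}$; the paper uses $e_{11}+e_{nn}+e_{1n}$).

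The extension to $n>3$, however, has a genuine gap. Your intended obstruction $e_{12}+e_{n-1,n}$ actually \emph{does} lie in $f(UT_n)$ once $n\ge4$. For $n=4$, take
\[
z_1=e_{12}-e_{34},\qquad z_2=(e_{11}-e_{44})+(e_{22}-e_{33});
\]
both are skew-symmetric, and $z_1z_2=e_{12}+e_{34}$. The mechanism is precisely the ``extra entries'' you flagged as needing honest checking: for $n\ge4$ there are additional diagonal skew elements (here $e_{22}-e_{33}$) absent from $UT_3$, and these defeat the diagonal-vanishing argument. The epimorphism idea does not rescue this either---no $*$-compatible surjection $UT_n\twoheadrightarrow UT_3$ is constructed, and one is not obviously available, so Proposition~\ref{epimorphism} cannot be applied in the contrapositive.

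The paper's proof avoids this by choosing the witness $e_{11}+e_{nn}+e_{1n}$ and arguing directly in $UT_n$: assuming $AB=e_{11}+e_{nn}+e_{1n}$ with $A,B\in\mathcal{K}$, it uses the vanishing of the entries $(1,j)$ and $(n+1-j,n)$ of $AB$ for $j=2,\dots,n_0$ (paired via the skew-symmetry relations $a_{ij}=-a_{n+1-j,n+1-i}$) to show by induction that $a_{1j}=b_{1j}=0$ for all such $j$, whence $(AB)_{1n}=0$. This induction genuinely exploits the full skew constraint across the whole matrix, not just a $3\times3$ corner, which is exactly what your reduction is missing.
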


\begin{proof}
Let $n$ be an odd integer (the even case can be treated with minor adjustments), and let us assume that $f(UT_n)$ is a vector space.

Denoting $n_{0}=\frac{n+1}{2}$ we see that 
\begin{align*}
e_{11}+e_{nn}=& \ f(e_{11}-e_{nn},e_{11}-e_{nn})\\
e_{1n}=&f(e_{1,n_{0}}-e_{n_{0},n},-e_{1,n_{0}}+e_{n_{0},n}).
\end{align*}

Hence we must have $e_{11}+e_{nn}+e_{1n}\in f(UT_{n})$, that is, there exist $A,B\in \mathcal{K}$ such that 
\begin{equation}\label{equation for AB}
e_{11}+e_{nn}+e_{1n}=AB.
\end{equation}
Let us write 
\[
A=\sum_{1\leq i\leq j\leq n}a_{ij}e_{ij} \ \mbox{and} \ B=\sum_{1\leq i\leq j\leq n}b_{ij}e_{ij},
\]
and since $A,B\in\mathcal{K}$, we additionally have that $a_{ij}=-a_{n+1-j,n+1-i}$ and $b_{ij}=-b_{n+1-j,n+1-i}$, for all $i,j$.

We claim that $(AB)_{1n}=0$, which clearly leads us to a contradiction. To prove the claim, let us compute the following entries of the product $AB$:

\begin{enumerate}
    \item[1.] ``Half first row'': for $j=2,\dots,n_{0}$, the entry $(1,j)$ is given by 
    \[
    \sum_{i=1}^{j}a_{1i}b_{ij}
    \]
    \item[2.] ``Half last column'': for $i=n_{0},\dots,n-1$, the entry $(i,n)$ is given by
    \[
    \sum_{j=i}^{n}a_{ij}b_{jn}.
    \]
\end{enumerate}

We rewrite the entry $(i,n)$ above as 
\[
\sum_{j=i}^{n}a_{ij}b_{jn}=\sum_{j=i}^{n}a_{n+1-j,n+1-i}b_{1,n+1-j}=\sum_{k=1}^{l}a_{kl}b_{1k},
\]
for $l=2,\dots,n_{0}$.

We now prove that $a_{1j}=b_{1j}=0$ for $j=2,\dots,n_{0}$. To this end we proceed by induction on $n_{0}$. For the base of the induction, we notice that the entries $(1,2)$ and $(n-1,n)$ along with Equation \eqref{equation for AB} give us
\[
a_{11}b_{12}+a_{12}b_{22}=0 \ \mbox{and} \ a_{22}b_{12}+a_{12}b_{11}=0
\]
Since $a_{11}b_{11}\neq0$ and $a_{22}b_{22}=0$, we therefore get $a_{12}=b_{12}=0$. We assume now $a_{1j}=b_{1j}=0$ for $j<n_{0}$. Considering the entries $(1,n_{0})$ and $(n_{0},n)$ along with Equation \eqref{equation for AB} we have
\[
\sum_{i=1}^{n_{0}}a_{1i}b_{i,n_{0}}=0 \ \mbox{and} \ \sum_{k=1}^{n_{0}}a_{k,n_{0}}b_{1k}=0.
\]
By our induction hypothesis, the two equations above reduce to
\[
a_{11}b_{1,n_{0}}+a_{1,n_{0}}b_{n_{0},n_{0}}=0 \ \mbox{and} \ a_{1,n_{0}}b_{11}+a_{n_{0},n_{0}}b_{1,n_{0}}=0
\]
Now it is enough to use that $a_{11}b_{11}\neq0$ and $a_{n_{0},n_{0}}b_{n_{0},n_{0}}=0$ in order to get $a_{1,n_{0}}=b_{1,n_{0}}=0$.

Finally, to obtain our claim we just need to notice that
\[
(AB)_{1n}=\sum_{i=1}^{n}a_{1i}b_{in}=-\sum_{i=1}^{n}a_{1i}b_{1,n+1-i}
\]
and that $a_{1n}=b_{1n}=a_{1i}=b_{1i}=0$ for $i=2,\dots,n_{0}$.
\end{proof}

On the other hand, we obtain a different picture when we consider non-trivial gradings on $UT_{3}$. 

\begin{theorem}\label{ut3}
    Let $F$ be an algebraically closed field of characteristic different from $2$. Let $f\in P_{m,l}^G$. Assume that $UT_{3}$ is endowed with some $G$-graded involution $*$ on a non-trivial grading $\Gamma$ such that $supp(\Gamma)$ generates $G$. Then, the image of $f$ on $UT_{3}$ is a homogeneous vector space.
\end{theorem}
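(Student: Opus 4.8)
The plan is to reduce the problem, via Theorem~\ref{classificationgradedinvolution} and Proposition~\ref{epimorphism}, to the three explicit models $\Gamma_{2,3}$ and $\Gamma_{3,3}$ listed above (the trivial grading $\Gamma_{1,3}$ is excluded by hypothesis), and then in each case to exploit the smallness of the relevant homogeneous components together with Corollary~\ref{corollarydimension2}. More precisely, by Theorem~\ref{classificationgradedinvolution} any graded involution on $UT_3$ with support generating $G$ is isomorphic, as an algebra with graded involution, to the reflexive involution $r$ on one of the elementary gradings $\Gamma_{2,3}$ or $\Gamma_{3,3}$; since every such model is a quotient (in fact here an isomorphic copy) and Proposition~\ref{epimorphism} pushes homogeneity of images forward along epimorphisms of algebras with graded involution, it suffices to prove that $f(UT_3)$ is a homogeneous vector space when $UT_3$ carries $r$ and one of these two gradings. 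Because the support of each grading is abelian, the image $f(UT_3)$ of any $f\in P_{m,l}^G$ is contained in a single homogeneous component $\mathcal{A}_d$ (the product of the homogeneous degrees of the variables occurring in $f$), so it is automatically a homogeneous subspace once we know it is a vector space, and in fact it suffices to show $f(UT_3)$ is a vector space.

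\textbf{The grading $\Gamma_{3,3}$.} Here $\mathcal{A}_1=\spa\{e_{11},e_{22},e_{33}\}$, $\mathcal{A}_g=\spa\{e_{12},e_{23}\}$ and $\mathcal{A}_h=\spa\{e_{13}\}$, all of dimension $\le 3$, and $\mathcal{A}_g,\mathcal{A}_h\subset J$. If $f$ involves at least one variable of degree $g$ or $h$, then $f(UT_3)\subset J$, and one checks case by case: if the total degree is $h$ then $f(UT_3)\subset\mathcal{A}_h=\spa\{e_{13}\}$ is at most one-dimensional hence a vector space; if the total degree is $g$ then $f(UT_3)\subset\mathcal{A}_g$ has dimension $2$, and Corollary~\ref{corollarydimension2} applies. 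The remaining case is when $f$ uses only neutral variables; then $f(UT_3)\subset\mathcal{A}_1=\spa\{e_{11},e_{22},e_{33}\}$, which is three-dimensional, so Corollary~\ref{corollarydimension2} is not directly available. For this case I would argue as in the proof of Proposition~\ref{ut2gradedinvolution}: the neutral component $\mathcal{A}_1$ is the commutative algebra of diagonal matrices, on which $r$ acts by $e_{11}\leftrightarrow e_{33}$, $e_{22}\mapsto e_{22}$, so $\mathcal{S}_1=\spa\{e_{11}+e_{33},e_{22}\}$ and $\mathcal{K}_1=\spa\{e_{11}-e_{33}\}$; using that $\mathcal{A}_1$ is commutative, $f$ restricted to $\mathcal{A}_1$ is (up to scalar) a single monomial in commuting symmetric and skew variables, and a short direct computation of such a monomial on diagonal matrices shows its image is one of $\{0\}$, $\mathcal{S}_1$, $\mathcal{K}_1$, $\mathcal{S}_1\cap\mathcal{D}'$ or $\mathcal{D}$ — in every case a vector space.

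\textbf{The grading $\Gamma_{2,3}$.} Here $\mathcal{A}_1=\spa\{e_{11},e_{22},e_{33},e_{13}\}$ (dimension $4$) and $\mathcal{A}_g=\spa\{e_{12},e_{23}\}\subset J$ (dimension $2$). If $f$ uses at least one $g$-variable, then $f(UT_3)\subset\mathcal{A}_g$, and Corollary~\ref{corollarydimension2} finishes it. The substantive case is again $f$ with only neutral variables, where $f(UT_3)\subset\mathcal{A}_1$, a four-dimensional algebra; note $\mathcal{A}_1\cong UT_2$ would be too naive since $\mathcal{A}_1$ is not $UT_2$, but $\mathcal{A}_1$ is the commutative subalgebra $\spa\{e_{11},e_{22},e_{33},e_{13}\}$, on which $r$ acts by $e_{13}\mapsto e_{13}$, $e_{11}\leftrightarrow e_{33}$, $e_{22}\mapsto e_{22}$. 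Here I expect the main obstacle: $\mathcal{A}_1$ is commutative of dimension $4$, so neither Corollary~\ref{corollarydimension2} nor a trivial support argument suffices, and one needs a genuine analysis analogous to Propositions~\ref{ut2r} and~\ref{ut2s}. The plan is to obtain generators of the relatively free algebra $F\langle Y\cup Z\rangle$ modulo the $*$-identities of $(\mathcal{A}_1,r)$ — $\mathcal{A}_1$ being commutative, every $*$-polynomial reduces modulo commutators to a monomial $y_1\cdots y_l z_{l+1}\cdots z_m$ in the neutral symmetric and skew variables — then parametrize generic symmetric elements as $\mathrm{diag}(w_1,w_2,w_1)+w_3 e_{13}$ and skew elements as $\mathrm{diag}(v_1,0,-v_1)+v_2 e_{13}$, compute the resulting monomial as an element of $F[W]$-valued $4\times 4$... rather $3\times 3$ matrices, and read off from the coefficients (which are monomials in the commuting parameters, exactly as in the $UT_2$ computations) that the image is cut out by linear conditions and is therefore one of finitely many explicit homogeneous subspaces of $\mathcal{A}_1$. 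Collecting the three gradings, an appeal to Theorem~\ref{classificationgradedinvolution} and Proposition~\ref{epimorphism} completes the proof.
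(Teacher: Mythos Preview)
Your overall strategy---reduce via Theorem~\ref{classificationgradedinvolution} and Proposition~\ref{epimorphism} to the two concrete gradings $\Gamma_{2,3}$ and $\Gamma_{3,3}$, then exploit low dimensions together with Corollary~\ref{corollarydimension2}---is exactly what the paper does, and your treatment of $\Gamma_{3,3}$ is essentially the same as Proposition~\ref{propositionZ3}.

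The genuine gap is in the neutral-only case for $\Gamma_{2,3}$. You assert that $\mathcal{A}_1=\spa\{e_{11},e_{22},e_{33},e_{13}\}$ is commutative and hence that every $*$-polynomial collapses to a single monomial $y_1\cdots y_l z_{l+1}\cdots z_m$ modulo its identities. This is false: $e_{11}e_{13}=e_{13}$ while $e_{13}e_{11}=0$, so $\mathcal{A}_1$ is not commutative, and in particular $[z,y]$ is not an identity for $(\mathcal{A}_1,r)$ (take $z=e_{11}-e_{33}$, $y=e_{13}$). A related slip: you parametrize a generic skew element of $\mathcal{A}_1$ as $\mathrm{diag}(v_1,0,-v_1)+v_2 e_{13}$, but $e_{13}^{r}=e_{13}$ is symmetric, so $\mathcal{K}_1=\spa\{e_{11}-e_{33}\}$ and the $v_2$-term does not belong there.

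What the paper does instead (Lemma~4.3 and Lemma~\ref{supp2ut3}) is observe that $(\mathcal{A}_1,r)$ satisfies precisely the $*$-identities of $(UT_2,r)$, so Proposition~\ref{pgeneratorsreflection} applies and $f$ reduces modulo these identities to
\[
\alpha\, y_1\cdots y_l z_{l+1}\cdots z_m+\sum_{i=1}^{l}\alpha_i\, y_1\cdots\widehat{y_i}\cdots y_l z_{l+1}\cdots z_{m-1}[z_m,y_i],
\]
i.e.\ the commutator terms survive. The image is then computed with the generic matrices \eqref{matrices3}, mirroring the $UT_2$ argument. So the missing idea is not that $\mathcal{A}_1$ is commutative (it is not), but that it shares the $*$-identity theory of $(UT_2,r)$; once you import that, the rest of your outline goes through.
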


As in the $2\times 2$ case, we divide the proof of Theorem \ref{ut3} in two sections.

\subsection{The grading \boldmath{$\Gamma_{2,3}$}}

We recall that $\mathcal{A}_1=\spa\{e_{11},e_{22},e_{33},e_{13}\}$ and $\mathcal{A}_g=\spa\{e_{12},e_{23}\}$. Hence,
\[
\mathcal{S}_1=\spa\{e_{11}+e_{33},e_{22},e_{13}\}, \mathcal{S}_g=\spa\{e_{12}+e_{23}\},
\]
\[
\mathcal{K}_1=\spa\{e_{11}-e_{33}\} \ \mbox{and} \ \mathcal{K}_g=\spa\{e_{12}+e_{23}\}.
\]

Since the homogeneous components are invariant under the involution, we may regard the neutral component $\mathcal{A}_{1}$ as an algebra with involution as well. We notice some similarities between the reflexive case on $UT_{2}$ and $\mathcal{A}_{1}$.

\begin{lemma}
The neutral component of $UT_{3}$ with reflexive involution and grading $\Gamma_{2,3}$ satisfies the following identities:
\begin{eqnarray*}
\begin{array}{ccccc}
     (i) \ [y_{1},y_{2}]; & (ii) \ [z_{1},z_{2}]; & (iii) \ [y_{1},z_{1}][y_{2},z_{2}]; & (iv) \ z_{1}y_{1}z_{2}-z_{2}y_{1}z_{1}.
\end{array}
\end{eqnarray*}
\end{lemma}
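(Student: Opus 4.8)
The plan is to verify each of the four identities by a direct computation, using an explicit parametrization of the symmetric and skew-symmetric elements of the neutral component $\mathcal{A}_1$ with respect to the grading $\Gamma_{2,3}$ and the reflexive involution. From the description $\mathcal{S}_1=\spa\{e_{11}+e_{33},e_{22},e_{13}\}$ and $\mathcal{K}_1=\spa\{e_{11}-e_{33}\}$, a generic symmetric element of $\mathcal{A}_1$ has the form $y=\alpha(e_{11}+e_{33})+\beta e_{22}+\gamma e_{13}$ and a generic skew-symmetric element has the form $z=\delta(e_{11}-e_{33})$. The whole content is then to substitute these into the four polynomials and check that they vanish.

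First I would record the multiplication rules: since $z=\delta(e_{11}-e_{33})$ is diagonal and $\mathcal{K}_1$ is one-dimensional, any two skew elements commute, so $(ii)$ is immediate; and since two symmetric elements $y,y'$ only involve $e_{11}+e_{33},e_{22},e_{13}$, a short computation shows $yy'=y'y$ (the only possible obstruction to commuting would come from the $e_{13}$ term, but $e_{13}$ is annihilated on both sides by the diagonal part and $e_{13}e_{13}=0$), which gives $(i)$. For $(iv)$, I would compute $z y z'$ with $z=\delta(e_{11}-e_{33})$, $z'=\delta'(e_{11}-e_{33})$ and $y$ symmetric as above: the diagonal factors force the result to be $\delta\delta'\bigl(\alpha e_{11}+\alpha e_{33}-\gamma e_{13}\bigr)$ — in particular symmetric in $\delta\leftrightarrow\delta'$ — so $z y z' - z' y z=0$. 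For $(iii)$, I would note that $[y,z]$ lands in the linear span of $e_{13}$ (indeed $[y,z]=\delta\bigl((\alpha-\alpha)(\dots)-2\gamma? \bigr)$; the only surviving term is a multiple of $e_{13}$, coming from $\gamma e_{13}$ conjugated by the diagonal $z$), and $e_{13}\cdot e_{13}=0$, hence any product $[y_1,z_1][y_2,z_2]=0$.

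Alternatively, and perhaps more cleanly for the exposition, I would exploit the structural analogy with $(UT_2,r)$ already flagged in the text: the map sending $\mathcal{A}_1$ onto $UT_2$ by collapsing $e_{11},e_{33}$ appropriately, or simply the observation that $\mathcal{A}_1$ modulo the ideal $\spa\{e_{13}\}$ behaves like $(UT_2,r)$, while $\spa\{e_{13}\}$ is a square-zero ideal. Then $(i)$ and $(ii)$ follow because $UT_2$ with reflexive involution satisfies $[y_1,y_2]=[z_1,z_2]=0$ on its symmetric/skew parts (the symmetric part of $(UT_2,r)$ is $\spa\{e_{11}+e_{22},e_{12}\}$, which is commutative, and the skew part is one-dimensional), together with the fact that the $e_{13}$-corrections are central and square-zero; and $(iii)$, $(iv)$ follow since modulo $\spa\{e_{13}\}$ they reduce to the corresponding identities of $(UT_2,r)$ — namely $[y,z]$ has image in $\spa\{e_{12}\}$ which squares to zero, and $z_1 y z_2 = z_2 y z_1$ because $z_1,z_2$ are scalar on the relevant block — while the $e_{13}$-part is handled directly as above.

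The computations are entirely routine; the only thing to be careful about is bookkeeping with the $e_{13}$ entry, which is where all four identities could a priori fail, so I expect the mild ``obstacle'' to be organizing the verification so that the role of the square-zero ideal $\spa\{e_{13}\}$ is made transparent rather than checking sixteen matrix products by brute force. I would therefore present the proof by first reducing modulo $\spa\{e_{13}\}$ to the two-dimensional picture and then accounting for the correction term, which keeps the argument short.
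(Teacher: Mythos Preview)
Your proposal is correct and follows essentially the same approach as the paper, namely a direct verification using the explicit descriptions $\mathcal{S}_1=\spa\{e_{11}+e_{33},e_{22},e_{13}\}$ and $\mathcal{K}_1=\spa\{e_{11}-e_{33}\}$; the paper simply declares the computation ``immediate'' and omits it. One small slip in your exposition of (i): $e_{13}$ is not annihilated by $e_{11}+e_{33}$ on either side but rather fixed by it (and annihilated by $e_{22}$), which is what actually makes $yy'$ symmetric in $y,y'$; the conclusion is unaffected.
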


\begin{proof}
It is immediate, hence omitted.
\end{proof}

One can see that the identities from the lemma above are exactly the ones satisfied by $UT_{2}$ with the reflexive involution (see \cite[Theorem 3.1]{VincenzoKoshlukovScala}). So it is reasonable to expect that we can use the same approach from Proposition \ref{ut2r}. Let us thus consider the following evaluation of the variables $y$'s and $z$'s by matrices over $F[W]$:
\begin{eqnarray}\label{matrices3}
y_i=\begin{pmatrix}
w_1^{(i)} & 0 & w_3^{(i)}\\
 & w_2^{(i)} & 0\\
 & & w_1^{(i)} 
\end{pmatrix} \ \mbox{and} \ z_j=\begin{pmatrix}
w_1^{(j)} & 0 & 0\\
 & 0 & 0\\
 & & -w_1^{(j)}
\end{pmatrix}
\end{eqnarray}

\begin{lemma}\label{ut3evaluation}
Let $y_{i}, i=1,\dots,m$, as in equation \eqref{matrices3}. Then the entry $(1,3)$ of $y_{1}\cdots y_{m}$ is given by 
\[
\sum_{i=1}^{m}w_{1}^{(1)}\cdots \widehat{w_{1}^{(i)}}\cdots w_{1}^{(m)}w_{3}^{(i)}.
\]
\end{lemma}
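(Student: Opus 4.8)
The statement to prove is Lemma \ref{ut3evaluation}: with the evaluation \eqref{matrices3}, the $(1,3)$ entry of $y_1\cdots y_m$ equals $\sum_{i=1}^m w_1^{(1)}\cdots\widehat{w_1^{(i)}}\cdots w_1^{(m)}w_3^{(i)}$.

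The plan is to proceed by induction on $m$, exactly as suggested by the pattern of Lemma \ref{lemma12entry}. The key observation is that each matrix $y_i$ in \eqref{matrices3} is block upper triangular with respect to the splitting of indices into $\{1,3\}$ and $\{2\}$: the $(1,1)$, $(1,3)$, $(3,3)$ entries are $w_1^{(i)},w_3^{(i)},w_1^{(i)}$ respectively, the $(2,2)$ entry is $w_2^{(i)}$, and all entries coupling the block $\{1,3\}$ to the index $2$ (namely $(1,2),(2,3)$, and the always-zero $(2,1),(3,2),(3,1)$) vanish. Consequently the product $y_1\cdots y_m$ also has zero entries in positions $(1,2)$ and $(2,3)$, so its $2\times2$ principal submatrix on rows and columns $\{1,3\}$ is simply the product of the corresponding $2\times2$ submatrices $\left(\begin{smallmatrix} w_1^{(i)} & w_3^{(i)}\\ 0 & w_1^{(i)}\end{smallmatrix}\right)$. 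But this is precisely the situation of Lemma \ref{lemma12entry} — an upper triangular $2\times2$ matrix with equal diagonal entries $w_1^{(i)}$ and off-diagonal entry $w_3^{(i)}$ — so the $(1,3)$ entry of $y_1\cdots y_m$ coincides with the "$(1,2)$" entry computed there, giving the claimed formula.

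Concretely, I would first record that the $(1,2)$ and $(2,3)$ entries of every partial product $y_1\cdots y_k$ vanish; this is an immediate induction using that the matrices $y_i$ have this property and that the entries $(2,1),(3,1),(3,2)$ are identically zero in upper triangular matrices. Then I would note that, restricting to rows and columns $\{1,3\}$, the map sending a matrix to its $\{1,3\}\times\{1,3\}$ submatrix is multiplicative on the set of matrices with vanishing $(1,2)$ and $(2,3)$ entries, so the $\{1,3\}$-submatrix of $y_1\cdots y_m$ is $\prod_{i=1}^m\left(\begin{smallmatrix} w_1^{(i)} & w_3^{(i)}\\ 0 & w_1^{(i)}\end{smallmatrix}\right)$. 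Finally I would invoke Lemma \ref{lemma12entry} (with $w_1^{(i)}$ playing the role of the diagonal variable and $w_3^{(i)}$ the role of $w_2^{(i)}$) to read off the upper-right entry as $\sum_{i=1}^m w_1^{(1)}\cdots\widehat{w_1^{(i)}}\cdots w_1^{(m)}w_3^{(i)}$, which is the $(1,3)$ entry of $y_1\cdots y_m$.

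There is essentially no obstacle here: the lemma is a routine computation, and the proof in the paper just says "induction on $m$". The only thing to be a little careful about is the bookkeeping that the cross-block entries stay zero along the whole product so that the reduction to the $2\times2$ case is legitimate; once that is in place, the formula is literally Lemma \ref{lemma12entry}. If one prefers to avoid the block-matrix language entirely, a direct induction also works: write $y_1\cdots y_m=(y_1\cdots y_{m-1})y_m$, expand $(y_1\cdots y_m)_{13}=(y_1\cdots y_{m-1})_{11}w_3^{(m)}+(y_1\cdots y_{m-1})_{13}w_1^{(m)}$ using that $(y_1\cdots y_{m-1})_{12}=0$ and $(y_m)_{33}=w_1^{(m)}$, observe $(y_1\cdots y_{m-1})_{11}=w_1^{(1)}\cdots w_1^{(m-1)}$, and substitute the inductive formula for $(y_1\cdots y_{m-1})_{13}$ to obtain the result.
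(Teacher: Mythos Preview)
Your proposal is correct and takes essentially the same approach as the paper, which simply writes ``Induction on $m$.'' Your direct inductive computation at the end is exactly what the paper has in mind, and your block-matrix reduction to Lemma~\ref{lemma12entry} is a pleasant repackaging of the same argument rather than a genuinely different route.
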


\begin{proof}
    Induction on $m$.
\end{proof}

The proof of the next lemma follows the ideas from Proposition \ref{ut2r}. 

\begin{lemma}\label{supp2ut3}
    Let $f\in P_{m,l}$. Then $f(\mathcal{A}_{1})$ is $\{0\}, J, \mathcal{S}_{1}\cap \mathcal{D}, (\mathcal{S}_1\cap\mathcal{D})+J, \mathcal{S}_{1}, \mathcal{K}_{1} $ or $\mathcal{K}_{1}+J$, where $J$ denotes the subspace spanned by $\{e_{13}\}$, and $\mathcal{D}$ stands for the linear span of $\{e_{11},e_{33}\}$.
\end{lemma}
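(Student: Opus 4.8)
The plan is to mimic almost verbatim the argument of Proposition \ref{ut2r}, exploiting the fact that the neutral component $\mathcal{A}_1$ satisfies exactly the same identities as $(UT_2,r)$. Concretely, I would first invoke the lemma above (identities $(i)$--$(iv)$), together with Proposition \ref{pgeneratorsreflection} applied to $\mathcal{A}_1$: since $Id(UT_2,r)$ is generated by these four identities, any $f\in P_{m,l}$ may be rewritten, modulo $Id(\mathcal{A}_1,r)$, in the normal form
\[
f=\alpha\, y_1\cdots y_l z_{l+1}\cdots z_m+\sum_{i=1}^{l}\alpha_i\, y_1\cdots\widehat{y_i}\cdots y_l z_{l+1}\cdots z_{m-1}[z_m,y_i].
\]
If $\alpha=0$ then $f(\mathcal{A}_1)\subseteq J=\spa\{e_{13}\}$, which is a nilpotent ideal of $\mathcal{A}_1$, and since the image of $[z_m,y_i]$ lands in $J$ while $J$ is one-dimensional, $f(\mathcal{A}_1)\in\{0,J\}$.

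Assuming $\alpha\neq 0$, I would plug in the generic matrices from \eqref{matrices3}, set $\eta=m-l$, and use Lemma \ref{ut3evaluation} to read off the $(1,3)$-entry of $y_1\cdots y_l z_{l+1}\cdots z_m$, exactly as the $(1,2)$-entry was computed in Proposition \ref{ut2r}. One gets that under this evaluation $f$ equals
\[
\alpha\, w_1^{(1)}\cdots w_1^{(m)}\bigl(e_{11}+(-1)^{\eta}e_{33}\bigr)+\sum_{i=1}^{l}\bigl((-1)^{\eta}\alpha+2\alpha_i\bigr)w_1^{(1)}\cdots\widehat{w_1^{(i)}}\cdots w_1^{(m)}w_2^{(i)}\,\text{(something)}\cdot e_{13},
\]
plus a possible $e_{22}$-contribution which vanishes because in \eqref{matrices3} the $(2,2)$-entry of each $y_i$ is $w_2^{(i)}$ and of each $z_j$ is $0$; hence whenever $\eta\geq 1$ the diagonal part is supported on $e_{11},e_{33}$ only, and when $\eta=0$ (purely symmetric) we must additionally allow for the $e_{22}$-term and the evaluation should instead keep $w_2^{(i)}$ general so that the full $\mathcal{S}_1$ can appear. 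This is the one genuine discrepancy with the $2\times 2$ case and the place where I expect to spend the most care: in Proposition \ref{ut2r} the symmetric image was $2$-dimensional ($F$ or $\mathcal{S}$), whereas here $\mathcal{S}_1$ is $3$-dimensional, so I must check that a suitable specialization genuinely realizes all of $\mathcal{S}_1$ (not merely $\mathcal{S}_1\cap\mathcal{D}$ or $(\mathcal{S}_1\cap\mathcal{D})+J$), and conversely that the listed smaller spaces occur precisely when the relevant coefficients vanish.

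After that, the case analysis is routine and parallels Proposition \ref{ut2r} line by line: if $(-1)^{\eta}\alpha+2\alpha_i=0$ for all $i$, then the image is $\mathcal{S}_1\cap\mathcal{D}$ or $\mathcal{K}_1$ (according to the parity of $\eta$) when $f$ actually involves an $e_{22}$-free evaluation, or $\mathcal{S}_1$ when the $y_i$'s still contribute a free $w_2$-parameter; if some coefficient is nonzero, then $f(\mathcal{A}_1)$ is contained in $\mathcal{S}_1$ (for $\eta$ even) or $\mathcal{K}_1+J$ (for $\eta$ odd), and the same specialization of the commuting variables used in Proposition \ref{ut2r} — setting $w_1^{(i)}=1$ for $i\neq k$, $w_2^{(i)}=0$ for $i\neq k$ — shows the inclusion is an equality. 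Combining all subcases yields exactly the list $\{0\}, J, \mathcal{S}_1\cap\mathcal{D}, (\mathcal{S}_1\cap\mathcal{D})+J, \mathcal{S}_1, \mathcal{K}_1, \mathcal{K}_1+J$. The main obstacle, as noted, is handling the extra dimension coming from $e_{22}$ correctly — deciding in which subcases $e_{22}$ enters the image and choosing evaluations that separate $\mathcal{S}_1$ from $\mathcal{S}_1\cap\mathcal{D}$ — but this is a finite bookkeeping matter rather than a conceptual difficulty.
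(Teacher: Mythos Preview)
Your plan is essentially the paper's own argument: rewrite $f$ via Proposition \ref{pgeneratorsreflection}, dispose of $\alpha=0$, and then run the evaluation from \eqref{matrices3} together with Lemma \ref{ut3evaluation} exactly as in Proposition \ref{ut2r}. The only point worth sharpening is the $e_{22}$ bookkeeping you flag as the main obstacle. The paper resolves it by a clean dichotomy rather than interleaving cases: if $\eta=0$ (no skew variables at all) then $f=\alpha\,y_1\cdots y_l$ modulo identities and one checks directly that its image is $\mathcal{S}_1$; if $\eta\geq 1$ then every evaluation has $(2,2)$-entry zero (since $\mathcal{K}_1=\spa\{e_{11}-e_{33}\}$), so $f(\mathcal{A}_1)\subseteq(\mathcal{S}_1\cap\mathcal{D})+J$ or $\mathcal{K}_1+J$ according to parity, and from there the coefficient analysis is literally that of Proposition \ref{ut2r}. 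In particular your sentence ``$f(\mathcal{A}_1)$ is contained in $\mathcal{S}_1$ (for $\eta$ even)'' should read $(\mathcal{S}_1\cap\mathcal{D})+J$ once $\eta\geq 2$; the full $\mathcal{S}_1$ arises only in the $\eta=0$ branch. (Also, in your displayed formula the variable attached to $e_{13}$ should be $w_3^{(i)}$, matching \eqref{matrices3}, not $w_2^{(i)}$.)
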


\begin{proof}
    By Proposition \ref{pgeneratorsreflection} we may write $f$ as
    \[
    f=\alpha y_{1}\cdots y_{l}z_{l+1}\cdots z_{m}+\sum_{i=1}^{l}\alpha_{i}y_{1}\cdots \widehat{y_{i}}\cdots y_{l}z_{l+1}\cdots z_{m-1}[z_{m},y_{i}].
    \]
    
    We will assume $\alpha\neq0$, since otherwise we clearly have $f(UT_{3})\in\{\{0\},J\}$.

    Let $\eta=m-l$. Then notice that the main diagonal of $f$ is given by 
    \[
    w_{1}^{(1)}\cdots w_{1}^{(m)}(e_{11}+(-1)^{\eta}e_{33})+\lambda w_{2}^{(1)}\cdots w_{2}^{(m)}e_{22},
    \]
    where $\lambda=0$ if there is no skew-symmetric variable in $f$, and $\lambda=1$ otherwise. The case where $f$ has only symmetric variables is obvious, indeed one can easily obtain $f(UT_{3})=\mathcal{S}_{1}$. Hence we may assume that $f$ has skew-symmetric variables and also $\lambda=0$. Hence 
\[
f(UT_{3})\subset \mathcal ({S}_{1}\cap \mathcal{D})+J \ \mbox{or} \ f(UT_{3})\subset \mathcal{K}_{1}+J,
\]
accordingly $\eta$ is even or $\eta$ is odd, respectively. Now let us turn our attention to the nilpotent part of $f$.  First we notice that $[z_{m},y_{i}]=2w_{3}^{(i)}w_{1}^{(m)}e_{13}$, and therefore
    \[
    y_{1}\cdots \widehat{y_{i}}\cdots y_{l}z_{l+1}\cdots z_{m-1}[z_{m},y_{i}]=2w_{1}^{(1)}\cdots \widehat{w_{1}^{(i)}}\cdots w_{1}^{(m-1)}w_{3}^{(i)}w_{1}^{(m)}.
    \]
    
    Hence in light of Lemma \ref{ut3evaluation} we have that $f(y_1,\dots,y_l,z_{l+1},\dots,z_m)$ equals 
    \[
    \alpha w_{1}^{(1)}\cdots w_{1}^{(m)}(e_{11}+(-1)^{\eta}e_{33}) +  \sum_{i=1}^{l}((-1)^{\eta}\alpha +2\alpha_{i})w_{1}^{(1)}\cdots \widehat{w_{1}^{(i)}} \cdots w_{1}^{(m)}w_{3}^{(i)}e_{13}.
    \]
    
    Proceding in a silimar fashion as in Proposition \ref{ut2r}, one can see that 
\[
f(UT_3)\in\{\mathcal{K}_1,\mathcal{K}_1+J,\mathcal{S}_1\cap \mathcal{D},(\mathcal{S}_1\cap \mathcal{D})+J\},
\]
which finishes the proof.
\end{proof}

\begin{proposition}\label{propositionZ2}
Let $f\in P_{m,l}^G$. Then $f(UT_{3})$ is $\{0\}, J, \mathcal{S}_{1}\cap \mathcal{D}, (\mathcal{S}_1\cap\mathcal{D})+J, \mathcal{S}_{1}, \mathcal{K}_{1}, \mathcal{K}_{1}+J$, some one-dimensional subspace of $\mathcal{A}_{1}$ or $\mathcal{A}_{1}$. Moreover, every one-dimensional subspace of $\mathcal{A}_{1}$ can be realized as the image of some multilinear graded polynomial with involution on $UT_{3}$, as well as the homogeneous component $\mathcal{A}_{1}$. 
\end{proposition}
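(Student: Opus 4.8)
The plan is to organise the proof according to the number $k$ of variables of homogeneous degree $g$ that occur in $f$; by multilinearity this $k$ is the same in every monomial of $f$. Since $\supp(\Gamma_{2,3})=\{1,g\}$ and $\mathcal A_g\mathcal A_g$ contains $e_{12}e_{23}=e_{13}\in\mathcal A_1$ while $e_{13}\notin\mathcal A_g$, we must have $g^2=1$, so that $f(UT_3)\subseteq\mathcal A_{g^k}$ is a subspace of $\mathcal A_1$ when $k$ is even and of $\mathcal A_g$ when $k$ is odd. When $k=0$ every variable is neutral, the $y$'s taking values in $\mathcal S_1$ and the $z$'s in $\mathcal K_1$, so $f(UT_3)$ coincides with $f(\mathcal A_1)$ in the sense of Lemma \ref{supp2ut3}; that lemma already identifies the image with one of $\{0\},J,\mathcal S_1\cap\mathcal D,(\mathcal S_1\cap\mathcal D)+J,\mathcal S_1,\mathcal K_1,\mathcal K_1+J$.

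When $k\geq 2$ I would show $f(UT_3)\subseteq J=\spa\{e_{13}\}$: in a monomial, take the two leftmost $g$-variables, observe that the (possibly empty) block of neutral variables between them evaluates inside the subalgebra $\mathcal A_1$, and use $\mathcal A_g\mathcal A_1\mathcal A_g\subseteq\mathcal A_g\mathcal A_g=\spa\{e_{13}\}=J$; since $J$ is an ideal, the whole monomial lies in $J$ (and in fact $f(UT_3)=0$ as soon as $k\geq 3$, because $J\mathcal A_1\subseteq J$ and $J\mathcal A_g=0$, so any remaining $g$-variable annihilates the monomial). As $\dim J=1$, Corollary \ref{corollarydimension2} gives $f(UT_3)\in\{\{0\},J\}$. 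When $k=1$ the unique $g$-variable $\xi$ takes values in the one-dimensional space $\mathcal S_g$ or $\mathcal K_g$, and every monomial reads $A\,\xi\,B$ with $A,B$ evaluated in $\mathcal A_1$; hence $f(UT_3)\subseteq\mathcal A_g$, and since $\dim\mathcal A_g=2$, Corollary \ref{corollarydimension2} already shows that $f(UT_3)$ is a vector space --- namely $\{0\}$, a one-dimensional subspace of $\mathcal A_g$, or $\mathcal A_g$ itself. To pin down which one occurs I would reuse the mechanism of Lemma \ref{supp2ut3}: write $f$ in a short normal form modulo $Id(UT_3,r)$, evaluate the neutral variables at the generic matrices in \eqref{matrices3} and $\xi$ at a generic element of $\mathcal S_g$ (resp.\ $\mathcal K_g$), compute the two coordinates of the $\mathcal A_g$-component of the result, and then specialise the commuting parameters as in Proposition \ref{ut2r}.

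For the realisability assertion, explicit witnesses suffice: $z_{1,g}z_{2,g}$ has image $J$; $y_{1,g}$ has image $\mathcal S_g$, $z_{1,g}$ has image $\mathcal K_g$, and $y_{1,1}y_{2,g}$ has image all of $\mathcal A_g$; while each subspace in the $k=0$ list is realised inside $\mathcal A_1$, most transparently through the isomorphism $(\mathcal A_1,r)\cong(UT_2,r)\times(F,\mathrm{id})$, by transporting the polynomials of Proposition \ref{ut2r} that realise $F,\mathcal K,\mathcal S,J$ and $\mathcal K+J$ on $(UT_2,r)$ (and taking any $*$-identity for $\{0\}$).

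The step I expect to carry the real weight is the case $k=1$. The inclusion $f(UT_3)\subseteq\mathcal A_g$, and hence "vector space", is immediate from Corollary \ref{corollarydimension2}; but obtaining the exact list of images in this case --- deciding precisely when one gets $\mathcal S_g$, when $\mathcal K_g$, when one of the remaining one-dimensional subspaces of $\mathcal A_g$, and when all of $\mathcal A_g$ --- requires a normal form for multilinear $*$-polynomials containing exactly one $g$-variable, playing for the component $\mathcal A_g$ the role that Proposition \ref{pgeneratorsreflection} plays for the neutral component. Establishing that normal form (or arguing around it) is the main obstacle; the cases $k=0$ and $k\geq 2$ are either immediate or already settled by Lemma \ref{supp2ut3}.
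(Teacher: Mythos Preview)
Your decomposition by the number $k$ of $g$-variables is exactly what the paper does, and the paper's own proof is in fact the one-liner you already have: Lemma \ref{supp2ut3} handles $k=0$, and for $k\ge 1$ the image lands in a space of dimension $\le 2$ (namely $\mathcal A_g$ for $k$ odd, $J$ for $k\ge 2$ even, $0$ for $k\ge 3$), so Corollary \ref{corollarydimension2} finishes. In particular, the ``main obstacle'' you flag for $k=1$ is not an obstacle at all: the statement only asserts that the image is \emph{some} one-dimensional subspace of $\mathcal A_g$ or all of $\mathcal A_g$ (note: the $\mathcal A_1$ in the statement is a typo for $\mathcal A_g$, as the paper's own proof makes clear), so no normal form beyond Corollary \ref{corollarydimension2} is needed.

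There is, however, a genuine gap in your realisability argument. The ``Moreover'' clause asks that \emph{every} one-dimensional subspace of $\mathcal A_g$ be obtained, not just $\mathcal S_g$ and $\mathcal K_g$. Your witnesses $y_{1,g}$ and $z_{1,g}$ hit only those two lines, and nothing in your list reaches $\spa\{\alpha e_{12}+\beta e_{23}\}$ for generic $\alpha,\beta$. The paper closes this gap with a single explicit family: taking a skew neutral variable $z$ and a skew $g$-variable $z'$, one checks directly from $zz'= e_{12}$ and $z'z=e_{23}$ (up to scalars) that the image of $\alpha\,zz'+\beta\,z'z$ is precisely $\spa\{\alpha e_{12}+\beta e_{23}\}$. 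Conversely, your work on realising the $k=0$ list via $(\mathcal A_1,r)\cong (UT_2,r)\times(F,\mathrm{id})$ is correct but not required by the statement, which only claims realisability for the $\mathcal A_g$-side.
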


\begin{proof}
The proof of the first part is clear from Lemma \ref{supp2ut3} and Corollary \ref{corollarydimension2}. Now, fix $\alpha,\beta\in F$ and consider the one-dimensional subspace 
\[
\mathcal{V}=\spa\{\alpha e_{12}+\beta e_{23}\}\subset \mathcal{A}_{1}.
\] 

It is straightforward to  check that the image of $\alpha z_{1,0}z_{1,1}+\beta z_{1,1}z_{1,0}$ on $UT_{3}$ is exactly $\mathcal{V}$, and that the image of $y_{1,0}y_{1,1}$ on $UT_{3}$ is exactly $\mathcal{A}_{1}$.
\end{proof}

\subsection{The grading \boldmath{$\Gamma_{3,3}$}}

We recall that $\mathcal{A}_{1}=\spa\{e_{11},e_{22},e_{33}\}, \mathcal{A}_{g}=\spa\{e_{12},e_{23}\}$ and $\mathcal{A}_{h}=\spa\{e_{13}\}$. We therefore write 
\[
UT_{3}=\mathcal{S}_{1}\oplus \mathcal{K}_{1}\oplus \mathcal{S}_{g}\oplus \mathcal{K}_{g}\oplus \mathcal{S}_{h}\oplus \mathcal{K}_{h}
\]
where $\mathcal{S}_{1}=\spa\{e_{11}+e_{33},e_{22}\},\mathcal{K}_{1}=\spa\{e_{11}-e_{33}\},\mathcal{S}_{g}=\spa\{e_{12}+e_{23}\},\mathcal{K}_{g}=\spa\{e_{12}-e_{23}\},\mathcal{S}_{h}=\spa\{e_{13}\}$ and $\mathcal{K}_{h}=\{0\}.$
Hence we have the following result.

\begin{proposition}\label{propositionZ3}
    Let $F$ be a field of characteristic different from $2$ and let $f\in P_{m,l}^G$. Then $f(UT_{3})$ is $\{0\},\mathcal{S}_{1},\mathcal{K}_{1},(\mathcal{K}_{1})^{2}$, some one-dimensional subspace of $\mathcal{A}_{g}$, $\mathcal{A}_{g}$ or $\mathcal{A}_{h}$. Moreover, any subspace of $\mathcal{A}_{g}$ can be realized as the image of some multilinear polynomial on $UT_{3}$.
\end{proposition}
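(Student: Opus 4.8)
The plan is to analyze $f(UT_3)$ according to the homogeneous degrees and symmetries of the variables occurring in $f$, using the very restrictive multiplication rules of the grading $\Gamma_{3,3}$. Recall that under this grading a monomial $\xi_1\cdots\xi_m$ is homogeneous of degree $d_1\cdots d_m$ where $d_i\in\{1,g,h\}$; since $\mathcal{A}_h^2=0$, $\mathcal{A}_h\mathcal{A}_g=\mathcal{A}_g\mathcal{A}_h=0$, $\mathcal{A}_g^2=\mathcal{A}_h$ and $\mathcal{A}_g^3=0$, any monomial containing a variable of degree $h$, or three or more variables of degree $g$, vanishes. Hence a nonzero multilinear $f\in P_{m,l}^G$ can only involve variables of neutral degree together with at most two variables of degree $g$. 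I would split into the cases: (a) $f$ has no $g$-variables (so $f(UT_3)\subset\mathcal{A}_1$); (b) $f$ has exactly one $g$-variable (so $f(UT_3)\subset\mathcal{A}_g$); (c) $f$ has exactly two $g$-variables (so $f(UT_3)\subset\mathcal{A}_h$). In cases (b) and (c) the target space has dimension $\le 2$ and $\le 1$ respectively, so Corollary \ref{corollarydimension2} immediately gives that $f(UT_3)$ is a vector space; one then identifies it as $\{0\}$, a one-dimensional subspace of $\mathcal{A}_g$, $\mathcal{A}_g$, or $\mathcal{A}_h$.

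The substantive case is (a), where $f$ involves only neutral variables and $f(UT_3)\subset\mathcal{A}_1=\mathcal{D}$, the diagonal matrices; here one must show $f(\mathcal{A}_1)\in\{\{0\},\mathcal{S}_1,\mathcal{K}_1,(\mathcal{K}_1)^2\}$. The point is that the neutral component, with the restriction of $r$, behaves essentially like a commutative algebra: on the diagonal, $\mathcal{S}_1=\spa\{e_{11}+e_{33},e_{22}\}$ and $\mathcal{K}_1=\spa\{e_{11}-e_{33}\}$, the symmetric diagonal matrices commute with everything and the product of two skew-symmetric diagonal elements lands in $(\mathcal{K}_1)^2=\spa\{e_{11}+e_{33}\}$. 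I would first argue that, modulo graded $*$-identities of $UT_3$, every $f\in P_{m,l}^G$ in neutral variables is equivalent to a scalar multiple of $y_{1,1}\cdots y_{l,1}z_{l+1,1}\cdots z_{m,1}$ — this uses that the $y_{i,1}$'s are central on the neutral diagonal and the $z_{j,1}$'s pairwise commute there (analogously to the identities $[y_1,x]$, $[z_1,z_2]$ used in Proposition \ref{ut2gradedinvolution}). Then evaluating: if $f$ has no skew variable, $f(UT_3)$ is $\{0\}$ or $\mathcal{S}_1$; if $f$ has an even number $\ge 2$ of skew variables, the product of diagonal skew matrices is symmetric and in fact $f(UT_3)$ is $\{0\}$ or $(\mathcal{K}_1)^2$; if $f$ has an odd number of skew variables, $f(UT_3)$ is $\{0\}$ or $\mathcal{K}_1$. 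A clean way to see the skew case is the matrix evaluation $z_{j,1}=\mathrm{diag}(w_1^{(j)},0,-w_1^{(j)})$, $y_{i,1}=\mathrm{diag}(w_1^{(i)},w_2^{(i)},w_1^{(i)})$: the product is $\mathrm{diag}(\prod w_1,\,\delta\prod w_2,\,(-1)^{\eta}\prod w_1)$ where $\eta$ counts the skew variables and $\delta=0$ once a skew variable is present, which makes the claimed list transparent.

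For the ``Moreover'' part, I would exhibit explicit polynomials: the one-dimensional subspace $\spa\{\alpha e_{12}+\beta e_{23}\}$ of $\mathcal{A}_g$ is realized, as in Proposition \ref{propositionZ2}, by $\alpha\, y_{1,1}z_{1,g}+\beta\, z_{1,g}y_{1,1}$ (checking that $y_{1,1}$ evaluated on $\mathcal{S}_1$ scales $e_{12}$ and $e_{23}$ by the two independent diagonal parameters, so all ratios $\alpha:\beta$ occur), and $\mathcal{A}_g$ itself is realized by $y_{1,1}z_{1,g}+z_{1,g}y_{1,1}$ or simply $y_{1,g}$ evaluated on $\mathcal{S}_g$ composed with a neutral symmetric scalar — more safely by the degree-$2$ polynomial $y_{1,1}y_{1,g}$, whose image on $\mathcal{S}_1\times\mathcal{S}_g$ already spans $\mathcal{A}_g$. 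The main obstacle I anticipate is the bookkeeping in case (a): one must be careful that reducing $f$ to a single monomial modulo identities is legitimate for \emph{graded} $*$-polynomials (the reduction must respect both the grading and the symmetry type of each variable), and that the parity argument for the skew-variable count is watertight, including the degenerate subcase where all the scalar coefficients of the surviving monomial vanish on the relevant diagonal, forcing $f(UT_3)=\{0\}$. Everything else is routine linear algebra on $3\times 3$ diagonal matrices.
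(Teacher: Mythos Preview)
Your approach is essentially the same as the paper's: split according to whether $f$ has only neutral variables (reduce to a single monomial using commutativity of the diagonal and read off $\{0\},\mathcal{S}_1,\mathcal{K}_1,(\mathcal{K}_1)^2$) or has non-neutral variables (land in $\mathcal{A}_g$ or $\mathcal{A}_h$ and invoke Corollary~\ref{corollarydimension2}). The case~(a) analysis, including the parity argument on $\eta$, is exactly what the paper does in one line.

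Two concrete slips, however. First, the assertion that ``any monomial containing a variable of degree $h$ vanishes'' is false: for instance $y_{1,1}y_{2,h}$ evaluated on $(e_{11}+e_{33})\times e_{13}$ gives $e_{13}\neq 0$. What is true is that any monomial with an $h$-variable \emph{and} a $g$-variable, or with two $h$-variables, vanishes; a monomial with a single $h$-variable and the rest neutral lands in the $1$-dimensional $\mathcal{A}_h$, so this missing case is harmless, but your case split (a)--(c) should acknowledge it. Second, your explicit polynomial for the one-dimensional subspaces is wrong. With $y_{1,1}=a(e_{11}+e_{33})+be_{22}\in\mathcal{S}_1$ and $z_{1,g}=c(e_{12}-e_{23})\in\mathcal{K}_g$ one computes
\[
\alpha\,y_{1,1}z_{1,g}+\beta\,z_{1,g}y_{1,1}=c\bigl((\alpha a+\beta b)e_{12}-(\alpha b+\beta a)e_{23}\bigr),
\]
whose image is all of $\mathcal{A}_g$ whenever $\alpha\neq\pm\beta$, not the line $\spa\{\alpha e_{12}+\beta e_{23}\}$. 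The paper instead takes $f=\alpha\,z_{1,1}y_{2,g}-\beta\,y_{2,g}z_{1,1}$ with $z_{1,1}\in\mathcal{K}_1$ and $y_{2,g}\in\mathcal{S}_g$; since both of these spaces are $1$-dimensional the image is forced to be the desired line. Replace your polynomial by one built from two $1$-dimensional pieces and the argument goes through.
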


\begin{proof}
Assume first that $f$ has (skew-)symmetric neutral variables only. Since $UT_3$ satisfies $[x_1,x_2]$ where $x_1,x_2$ are any (skew-)symmetric neutral variables, then  the image of $f$ on $UT_3$ is either zero or the image of a word in the form $y_{1,1}\cdots y_{l,1} z_{l+1,1}\cdots z_{m,1}$ on $UT_3$. Clearly the former gives us $f(UT_3)\in\{\mathcal{S}_{1},\mathcal{K}_{1},(\mathcal{K}_{1})^{2}\}$.

Let us assume now the $f$ has non-neutral variables. Since $x_1 x_2$ is an identity for $UT_3$ when $x_1$ has homogeneous degree $g$ and $x_{2}$ has homogeneous degree $h$, then $f(UT_3)$ is always contained in some non-neutral homogeneous component. In this case, we apply Corollary \ref{corollarydimension2} to conclude that $f(UT_{3})$ is a vector subspace of $\mathcal{A}_{g}$ or $\mathcal{A}_{h}$. Moreover, we notice that $\mathcal{A}_{g}=f(UT_{3})$ for $f=y_{1,0}y_{1,1}$, and for arbitrarily fixed $\alpha,\beta\in F$ and
\[
\mathcal{V}=\spa\{\alpha e_{12}+\beta e_{23}\}
\]
we have $\mathcal{V}=f(UT_{3})$ for $f=\alpha z_{1,0}y_{1,1}-\beta y_{1,1}z_{1,0}$.
\end{proof}

\begin{proof}[Proof of Theorem \ref{ut3}]
It is enough to apply Theorem \ref{classificationgradedinvolution} and Proposition \ref{epimorphism}, along with Propositions \ref{propositionZ2} and \ref{propositionZ3}.
\end{proof}

We finish this section by showing that, in general, Theorem \ref{ut3} can not be expected to hold for $UT_{n}$, $n\geq 4$, not even for the canonical $\Z_{n}$-grading. 

\begin{proposition}
    Let $n\geq4$ and let $UT_{n}$ be endowed with the canonical $\mathbb{Z}_{n}$-grading and reflexive involution. Then the image of the multilinear polynomial $f(y_{1,\overline{0}},y_{2,\overline{1}})=y_{1,\overline{0}}y_{2,\overline{1}}$ on $UT_{n}$ is not a vector space.
    \end{proposition}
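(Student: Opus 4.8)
The plan is to exhibit two matrices that lie in $f(UT_n)$ but whose sum does not; since a vector space is closed under addition, this settles the claim.

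First I would unwind the structure. In the canonical $\mathbb{Z}_n$-grading one has $\mathcal{A}_{\overline{0}}=\spa\{e_{11},\dots,e_{nn}\}$ (the diagonal matrices) and $\mathcal{A}_{\overline{1}}=\spa\{e_{12},e_{23},\dots,e_{n-1,n}\}$, while the reflexive involution sends $e_{ij}\mapsto e_{n+1-j,\,n+1-i}$. Hence a symmetric neutral element is a diagonal matrix $y_1=\sum_{i=1}^{n}d_ie_{ii}$ with $d_i=d_{n+1-i}$, and a symmetric element of degree $\overline{1}$ is $y_2=\sum_{i=1}^{n-1}c_ie_{i,i+1}$ with $c_i=c_{n-i}$. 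Because $y_1$ is diagonal one has $y_1y_2=\sum_{i=1}^{n-1}d_ic_i\,e_{i,i+1}$, so $f(UT_n)$ is exactly the set of superdiagonal matrices $\sum_{i=1}^{n-1}(d_ic_i)e_{i,i+1}$ obtained as the $d$'s and $c$'s range over their symmetry-constrained families. The decisive point is that the relation $d_i=d_{n+1-i}$ pairs the index $i$ with $n+1-i$, whereas $c_i=c_{n-i}$ pairs $i$ with $n-i$: this one-step offset is what prevents the image from being linear.

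For the two witnesses I would take $e_{12}$ and $e_{23}$. For $e_{12}$, set $y_1=e_{11}+e_{nn}\in\mathcal{S}_{\overline{0}}$ and $y_2=e_{12}+e_{n-1,n}\in\mathcal{S}_{\overline{1}}$; the only contributing index in $\sum_i d_ic_i\,e_{i,i+1}$ is $i=1$, so $y_1y_2=e_{12}$. For $e_{23}$ with $n\geq5$, set $y_1=e_{22}+e_{n-1,n-1}$ and $y_2=e_{23}+e_{n-2,n-1}$, where again only $i=2$ contributes and $y_1y_2=e_{23}$; when $n=4$ the matrix $e_{23}$ already lies in $\mathcal{S}_{\overline{1}}$, so one simply uses $y_1=e_{22}+e_{33}$, $y_2=e_{23}$.

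The heart of the argument --- the step I expect to require the most care --- is that $e_{12}+e_{23}\notin f(UT_n)$. Assume $y_1y_2=e_{12}+e_{23}$ with $y_1,y_2$ as above. Comparing superdiagonal entries yields $d_1c_1=1$, $d_2c_2=1$, and $d_ic_i=0$ for every $i$ with $3\leq i\leq n-1$. From $d_1c_1=1$ we get $c_1\neq0$, hence $c_{n-1}=c_1\neq0$ by the $c$-symmetry. Since $n\geq4$, the index $n-1$ belongs to $\{3,\dots,n-1\}$, so $d_{n-1}c_{n-1}=0$ forces $d_{n-1}=0$; but the $d$-symmetry gives $d_{n-1}=d_2$, whence $d_2=0$, contradicting $d_2c_2=1$. (This is exactly where $n\geq4$ is needed: for $n=3$ the index $n-1=2$ is not among the constrained ones, matching the positive result for $UT_3$.) Consequently $e_{12}$ and $e_{23}$ lie in $f(UT_n)$ while their sum does not, so $f(UT_n)$ fails to be closed under addition and is therefore not a vector space. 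The only remaining work is the three short entry-by-entry computations above, all immediate from $y_1y_2=\sum_i d_ic_i\,e_{i,i+1}$.
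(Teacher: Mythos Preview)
Your proof is correct and follows essentially the same approach as the paper: both exhibit $e_{12}$ and $e_{23}$ in the image via the same witnesses (including the $n=4$ adjustment), then derive a contradiction from the $(n-1,n)$ entry of a hypothetical product $y_1y_2=e_{12}+e_{23}$, using the symmetries $d_{n-1}=d_2$ and $c_{n-1}=c_1$. Your write-up is slightly more explicit about the role of the offset between the two symmetry pairings and about where $n\geq4$ enters, but the argument is the same.
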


    \begin{proof}
        Let us suppose $n$ even (the odd case is analogous). We recall that the symmetric parts of homogeneous degree $\overline{0}$ and $\overline{1}$ are given by
        \begin{align*}
        \mathcal{S}_{\overline{0}}=& \ \spa\{e_{ii}+e_{n+1-i,n+1-i}; i=1,\dots,n/2\}, \mbox{and}\\
        \mathcal{S}_{\overline{1}}=& \ \spa\{e_{n/2,(n+2)/2}, e_{i,i+1}+e_{n-i,n+1-i};i=1,\dots,-1+n/2\}.
        \end{align*}
        
        Assuming that $f(UT_{n})$ is a vector space, and noticing that 
        \begin{align*}
            f(e_{11}+e_{nn},e_{12}+e_{n-1,n})=& \ e_{12}\\
            f(e_{22}+e_{n-1,n-1},e_{23}+e_{n-2,n-1})=& \ e_{23}
        \end{align*}
        we therefore have $e_{12}+e_{23}\in f(UT_{n})$ (we take $e_{23}=f(e_{22}+e_{33},e_{23})$ in case $n=4$).
        Hence there exist $A\in \mathcal{S}_{0}$ and $B\in \mathcal{S}_{1}$ such that $e_{12}+e_{23}=AB$. 
        
        Writing $A=\sum_{i=1}^{n}a_{ii}e_{ii}$ and $B=\sum_{i=1}^{n-1}b_{i,i+1}e_{i,i+1}$, we therefore have
        \begin{align*}
            (AB)_{12}=& \ a_{11}b_{12}\\
            (AB)_{23}=& \ a_{22}b_{23}\\
            (AB)_{n-1,n}=& \ a_{n-1,n-1}b_{n-1,n}=a_{22}b_{12}
        \end{align*}
        
        Since the entry $(n-1,n)$ of $AB$ is zero, we conclude that either $(AB)_{12}=0$ or $(AB)_{23}=0$, a contradiction.
    \end{proof}

\section{Acknowlegments}

The author is thankful to his supervisors Plamen Koshlukov and Matej Bre\v{s}ar for their support.

\end{document}